\title{Numerical analysis of a singularly perturbed convection diffusion problem with shift in space}
\author{Mirjana Brdar,\footnote{Faculty of Technology Novi Sad, University of Novi Sad, Serbia,\newline \mbox{e-mail}: mirjana.brdar@uns.ac.rs}
        \,
        Sebastian Franz,\footnote{corresponding author, Institute of Scientific Computing, Technische Universit\"at Dresden, Germany,\newline  \mbox{e-mail}: sebastian.franz@tu-dresden.de}
        \,
        Lars Ludwig,\footnote{Institute of Scientific Computing, Technische Universit\"at Dresden, Germany,\newline  \mbox{e-mail}: lars.ludwig@tu-dresden.de}
        \,
        Hans-G\"{o}rg Roos,\footnote{Institute of Numerical Mathematics, Technische Universit\"at Dresden, Germany,\newline  \mbox{e-mail}: hans-goerg.roos@tu-dresden.de}}
\date{\today}
\renewcommand*\env@matrix[1][r]{\hskip -\arraycolsep
  \let\@ifnextchar\new@ifnextchar
  \array{*\c@MaxMatrixCols #1}}
\newcommand{\e}{\mathrm{e}}
\newcommand{\eps}{\varepsilon}
\newcommand{\norm}[2]{\|{#1}\|_{#2}}
\newcommand{\tnorm}[1]{\left|\!\!\;\left|\!\!\;\left| {#1}
                       \right|\!\!\;\right|\!\!\;\right|}
\newcommand{\R}{\mathbb{R}}
\newcommand{\U}{\mathcal{U}}
\newcommand{\PS}{\mathcal{P}}
\newcommand{\scp}[1]{\left\langle #1 \right\rangle}
\newcommand{\rarrow}{\quad\Rightarrow\quad}
\newcommand{\qmbox}[1]{\quad\mbox{#1}\quad}
\newcommand{\ds}{\mathrm{d}s}
\newcommand{\dt}{\mathrm{d}t}
\newcommand{\dx}{\mathrm{d}x}
\newcommand{\jump}[1]{[\hspace*{-2pt}[#1]\hspace*{-2pt}]}
\newcommand{\dy}{\mathrm{d}y}
\newcommand{\ord}[1]{\mathcal{O}( #1 )}
\newcommand{\pmtrx}[1]{\begin{pmatrix}[c]#1\end{pmatrix}}
\theoremstyle{plain}
\newtheorem{theorem}{Theorem}[section]
\newtheorem{lemma}[theorem]{Lemma}
\newtheorem{remark}[theorem]{Remark}
\begin{document}
  \maketitle
  \begin{abstract}
    We consider a singularly perturbed convection-diffusion problem that has in addition a shift term.
    We show a solution decomposition using asymptotic expansions and a stability result.
   {Based upon this we provide a numerical analysis of high order finite element method on layer adapted meshes.}
    We also apply a new idea of using a coarser mesh in places where weak layers appear. 
    Numerical experiments confirm our theoretical results.
  \end{abstract}

  \section{Introduction}
  In this paper we want to look at the static singularly perturbed problem given by
  \begin{subequations}\label{eq:Lu}
    \begin{align}
      -\eps u''(x)-b(x) u'(x)+c(x) u(x)+d(x) u(x-1)&=f(x),\quad x\in\Omega:=(0,2),\\
      u(2) &= 0,\\
      u(x) &= \Phi(x),\quad x\in(-1,0],
    \end{align}
  \end{subequations}
  where $0<\eps\ll 1$, $b\geq \beta>0$, $d\geq 0$, $\displaystyle c-\frac{b'}{2}-\frac{\norm{d}{L_\infty(1,2)}}{2}\geq\gamma>0$.
  For the function $\Phi$ we assume $\Phi(0)=0$, which is not a restriction as a simple transformation
  can always ensure this condition. Then, it holds $u\in \U:=H^1_0(\Omega)$.
  
  The literature on singularly perturbed problems is vast, see e.g. the book \cite{RST08} and the references therein.
  But for problems that in addition also have a shift-operator, sometimes also called a delay-operator, with a large shift,
  there are not many. For the time-dependent case and a reaction-diffusion type problem there are e.g. \cite{KumarKadalbajoo, Gupta, Bansal, Chakravarthy, KumarKumari}.
  Recently, we also investigated the time dependent version of a singularly perturbed reaction-diffusion problem in \cite{BFrLR22}
  using a finite element method in time and space.
  
  For convection dominated singularly perturbed problems with an additional shift there are also publications in literature, 
  see e.g. \cite{SR12,SR13,RS15}. They all consider a negative coefficient, here called $d$, which supports a maximum principle.
  Then finite differences on layer adapted meshes of rather low order are used. In our paper we consider finite element methods
  of arbitrary order for positive coefficients $d$.
  Standard convection-diffusion problems with a fixed convection coefficient show one boundary layer near
  the outflow boundary in contrast to two boundary layers for the reaction diffusion problem. Therefore, we expect 
  the behaviour of the problem with a shift also to have some different structure that those of reaction-diffusion type.
  
  In e.g. \cite{SR12} an asymptotic expansion of the solution is given, where the direction of shift and convection is opposite, 
  but only to the lowest order. For the purpose of this paper we want a complete solution decomposition. 
  Therefore, in Section~\ref{sec:SolDec} we provide a solution decomposition 
  of $u$ into various layers and a smooth part using a different approach. We prove it rigorously for the constant coefficient case.
  In Section~\ref{sec:NumAna} a numerical analysis is provided for the discretisation using finite elements of 
  arbitrary order on a classical S-type mesh and a new coarser type of mesh. Finally Section~\ref{sec:NumEx}
  provides some numerical results supporting our analysis. We finish this paper with a technical abstract on some
  terms involving Green's function.
  
  \textbf{Notation:} 
    For a set $D$, we use the notation $\norm{\cdot}{L^p(D)}$ for the $L^p-$norm over $D$, where $p\geq 1$.
    The standard scalar product in $L_2(D)$ is marked with $\scp{\cdot,\cdot}_D$.
    If $D=\Omega$ we sometimes drop the $\Omega$ from the notation.
    Throughout the paper, we will write $A\lesssim B$ if there exists a generic positive constant $C$ independent of the perturbation parameter $\eps$ and the mesh,
    such that $A\leq C B$. We will also write $A\sim B$ if $A\lesssim B$ and $B\lesssim A$.
  
  \section{Solution decomposition}\label{sec:SolDec}
  The considered problem with a shift term has some different properties compared to a convection-diffusion problem without
  the shift. One of the major ones is, that it is unknown whether a maximum principle holds for $d\geq 0$. In the case $d\leq 0$ a maximum principle
  is proved in e.g. \cite{SR12}, but the proof cannot be applied here.
  For the following solution decomposition we will need a stability result that is provided in the next theorem for the case of constant coefficients
  and assumed to hold true in the general case of variable coefficient.
  \begin{theorem}\label{thm:stability}
    Consider the problem: Find $u=u_1\chi_{(0,1)}+u_2\chi_{(1,2)}$,  where $\chi_D$ is the characteristic function of $D$ such that it holds
    \begin{align*}
      -\eps u_1''(x)-bu_1'(x)+cu_1(x)&=f(x)
      ,\,x\in(0,1),\\
      u_1(0)=0,\,u_1(1)&=\alpha,\\
      -\eps u_2''(x)-bu_2'(x)+cu_2(x)&=f(x)-du_1(x-1),\,x\in(1,2),\\
      u_2(2)=\beta,\,u_2(1)&=\alpha + \delta
    \end{align*}
    with constant $b>0$ and $c>0$, arbitrary boundary value $\beta\in\R$ and jump $\delta\in\R$, 
    and $\alpha\in\R$ chosen, such that $u_1'(1^-)=u_2'(1^+)$.
    Then we have
    \[
      \norm{u}{L^\infty}\lesssim\norm{f}{L^\infty(0,2)}+|\beta|+|\delta|.
    \]
  \end{theorem}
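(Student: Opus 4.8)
The plan is to exploit linearity together with the maximum principle for the operator $L v := -\eps v'' - b v' + c v$ (which holds since $c>0$), reducing everything to a bound on the single interface value $\alpha$. Once $|\alpha|\lesssim \norm{f}{L^\infty(0,2)}+|\beta|+|\delta|$ is established, standard barrier functions finish the proof: on $(0,1)$ the constant $\norm{f}{L^\infty}/c+|\alpha|$ is a barrier for $\pm u_1$, giving $\norm{u_1}{L^\infty(0,1)}\lesssim \norm{f}{L^\infty}+|\alpha|$; on $(1,2)$ the right-hand side $f-d\,u_1(\cdot-1)$ is already controlled by $\norm{f}{L^\infty}+|\alpha|$, so the same barrier argument with boundary data $\alpha+\delta$ and $\beta$ yields $\norm{u_2}{L^\infty(1,2)}\lesssim \norm{f}{L^\infty}+|\alpha|+|\delta|+|\beta|$. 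Thus the entire difficulty is concentrated in the $C^1$-matching condition $u_1'(1^-)=u_2'(1^+)$, which is what implicitly determines $\alpha$.

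To make this condition explicit I would split each piece by linearity. On $(0,1)$ write $u_1=\alpha\,\phi_1+\psi_1$ with $L\phi_1=0,\ \phi_1(0)=0,\ \phi_1(1)=1$ and $L\psi_1=f,\ \psi_1(0)=\psi_1(1)=0$. On $(1,2)$ the source is $f-d\alpha\,\phi_1(\cdot-1)-d\,\psi_1(\cdot-1)$, so write
\[
 u_2=(\alpha+\delta)\phi_2+\beta\,\omega_2+\psi_2+\alpha\,\rho_2+\sigma_2,
\]
where $\phi_2,\omega_2$ are the $L$-harmonic functions with boundary data $(1,0)$ and $(0,1)$ at $x=1,2$, while $\psi_2,\rho_2,\sigma_2$ solve $L\psi_2=f$, $L\rho_2=-d\,\phi_1(\cdot-1)$, $L\sigma_2=-d\,\psi_1(\cdot-1)$ with homogeneous boundary data. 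Substituting into $u_1'(1^-)=u_2'(1^+)$ and collecting the terms proportional to $\alpha$ turns the matching condition into the scalar linear equation
\[
 \alpha\,D_\eps=\delta\,\phi_2'(1^+)+\beta\,\omega_2'(1^+)+\psi_2'(1^+)+\sigma_2'(1^+)-\psi_1'(1^-),
 \qquad
 D_\eps:=\phi_1'(1^-)-\phi_2'(1^+)-\rho_2'(1^+).
\]

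The main obstacle is to show that the coefficient $D_\eps$ does not degenerate and that the numerator is not too large; both require the interface-flux estimates that are the subject of the Green's-function appendix. The fundamental solutions of $L$ behave like $\e^{\lambda_+ x}$ and $\e^{\lambda_- x}$ with $\lambda_+\sim c/b=\ord{1}$ and $\lambda_-\sim -b/\eps$, so on each subinterval the layer sits at the left (outflow) endpoint; in particular the $(1,2)$-problem has its layer at $x=1^+$. I would use this to prove $-\phi_2'(1^+)\gtrsim\eps^{-1}$. The decisive point is the sign: because $d\ge 0$ and $0\le\phi_1\le 1$, the function $\rho_2$ satisfies $L\rho_2\le 0$ with zero boundary data, hence $\rho_2\le0$ with $\rho_2(1)=0$ a boundary maximum, so $\rho_2'(1^+)\le 0$; likewise $\phi_1'(1^-)\ge 0$ since $\phi_1$ attains its maximum $1$ at $x=1$. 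Consequently all three contributions push $D_\eps$ in the same direction and
\[
 D_\eps=\underbrace{\phi_1'(1^-)}_{\ge 0}+\underbrace{\bigl(-\phi_2'(1^+)\bigr)}_{\gtrsim\,\eps^{-1}}+\underbrace{\bigl(-\rho_2'(1^+)\bigr)}_{\ge 0}\gtrsim\eps^{-1},
\]
which uses only $b>0$, $c>0$, $d\ge 0$ and is exactly why no smallness of $d$ is needed.

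It then remains to bound the numerator. The layer-derivative estimates give $|\phi_2'(1^+)|\lesssim\eps^{-1}$, $|\omega_2'(1^+)|\lesssim\eps^{-1}$, $|\psi_2'(1^+)|\lesssim\eps^{-1}\norm{f}{L^\infty}$ and $|\sigma_2'(1^+)|\lesssim\eps^{-1}\norm{f}{L^\infty}$ (the last using $\norm{\psi_1}{L^\infty}\lesssim\norm{f}{L^\infty}$), while $\psi_1'(1^-)=\ord{\norm{f}{L^\infty}}$ has no layer at the inflow endpoint $x=1^-$. Hence the right-hand side is $\ord{\eps^{-1}\bigl(\norm{f}{L^\infty(0,2)}+|\beta|+|\delta|\bigr)}$, and dividing by $D_\eps\gtrsim\eps^{-1}$ yields $|\alpha|\lesssim\norm{f}{L^\infty(0,2)}+|\beta|+|\delta|$. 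Feeding this into the two barrier estimates above completes the proof. I expect the genuinely technical work to be the rigorous layer-derivative bounds and, above all, the sign analysis securing the lower bound $D_\eps\gtrsim\eps^{-1}$.
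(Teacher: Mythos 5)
Your argument is correct, and its skeleton coincides with the paper's: both proofs turn the $C^1$-matching condition at $x=1$ into a scalar linear equation $\alpha\cdot D=N$ for the interface value and then show $D\gtrsim\eps^{-1}$ together with $|N|\lesssim\eps^{-1}\left(\norm{f}{L^\infty(0,2)}+|\beta|+|\delta|\right)$. Where you genuinely differ is in how these two bounds are obtained. The paper represents $u_1,u_2$ through the explicit Green's function of the constant-coefficient operator, writes $N$ and $D$ as iterated integrals of $G$ and $G_x$, and verifies $D=(b+d\e^{-c/b})\eps^{-1}+\ord{1}$ by a symbolic (MAPLE-assisted) expansion collected in Appendix~A; the final $L^\infty$ bound then comes from $\norm{G(x,\cdot)}{L^1(0,1)}\lesssim 1$. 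You instead decompose each piece into $L$-harmonic interpolants of the boundary data plus homogeneous-data particular solutions, obtain the lower bound on $D_\eps$ from a pure sign argument (the maximum principle gives $\phi_1'(1^-)\geq 0$ and $\rho_2'(1^+)\leq 0$, so only the elementary layer estimate $-\phi_2'(1^+)\gtrsim\eps^{-1}$ carries quantitative content), and close with barrier functions. This buys transparency: it shows without any computation why $d\geq 0$ suffices with no smallness assumption --- exactly the point the paper's expansion establishes only by exhibiting the nonnegative term $d\e^{-c/b}\eps^{-1}$ in $D$ --- and it would extend more readily beyond constant coefficients. The price is that you still owe the standard constant-coefficient interface-derivative bounds ($|\psi_2'(1^+)|,|\sigma_2'(1^+)|\lesssim\eps^{-1}\norm{f}{L^\infty}$, $|\psi_1'(1^-)|\lesssim\norm{f}{L^\infty}$, $|\omega_2'(1^+)|\lesssim\eps^{-1}$, $-\phi_2'(1^+)\sim b/\eps$), which you correctly flag as the remaining technical work; they all follow from the explicit exponential fundamental system with roots $\lambda_+\to c/b$ and $\lambda_-\sim -b/\eps$, i.e.\ from the same information the paper extracts from its Green's function.
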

  \begin{proof}
    Each of these two sub-problems is a standard convection diffusion problem 
    with a known Green's function $G$ for the case of homogeneous boundary conditions on $(0,1)$
    that can be constructed as shown e.g. in \cite[Chapter 1.1]{Melnikov2012}.
    Thus, we can, using $\hat b(t):=b-ct$ and $\hat c(t):=c+dt$, represent the solutions as
    \begin{align*}
      u_1(x) & = \alpha x+\int_0^1 G(x,t)\left(f(t)+\alpha \hat b(t) \right)\dt,\\
      u_2(x) & = (\alpha+\delta)(2-x)+\beta(x-1)\\&\hspace*{1.5cm}
                  +\int_0^1 G(x-1,t)\bigg( f(t+1)-(\alpha+\delta)(\hat b(t)+\hat c(t))+\beta\hat b(t)\\&\hspace*{5cm}
                  -d\int_0^1G(t,s)\left(f(s)+\alpha \hat b(s) \right)\ds \bigg)\dt,
    \end{align*}
    where in the second case we have used, that $G(x-1,t-1)$ is the Green's function for the problem on $(1,2)$.
    The condition for $\alpha$ can now be written as
    \begin{align}
      \alpha
      &=\frac{N}{D}\label{eq:alpha}
      \intertext{where}
      N&:=\beta-\delta+\int_0^1 G_x(0,t)\left(f(t+1)-\delta\hat c(t)+(\beta-\delta)\hat b(t)-d\int_0^1G(t,s)f(s)\ds\right)\dt\notag\\
        &\hspace{2cm}-\int_0^1 G_x(1 ,t)f(t)\dt\notag
      \intertext{and}
      D&:=2+\int_0^1 G_x(1,t)\hat b(t)\dt+\int_0^1 G_x(0,t)\left(\hat b(t)+\hat c(t)+d\int_0^1G(t,s)\hat b(s)\ds\right)\dt.\notag
    \end{align}
    For the given problem we can compute all relevant information concerning the Green's function, see Appendix \ref{app:a}, and can estimate
    \[
      |N| \lesssim |\beta|+|\delta|+\left(|\beta|+|\delta|+\norm{f}{L^\infty(0,2)}\right)\cdot\frac{1}{\eps}\quad\text{and}\quad
      D \gtrsim \frac{1}{\eps}.
    \]
    Therefore,
%
%
    we have
    \[
      |\alpha|\lesssim |\beta|+|\delta|+\norm{f}{L^\infty(0,2)}.
    \]
    Now using the representations of $u_1$ and $u_2$ and 
    \[
      \norm{G(x,\cdot)}{L^1(0,1)}\lesssim 1,\quad\text{for all }x\in(0,1)
    \]
    we have proved the assertion.
  \end{proof}

  Contrary to the reaction-diffusion case, where in addition to the boundary layers a strong inner layer forms, see \cite{BFrLR22},
  the convection-diffusion case has only a strong boundary layer at the outflow boundary and a weak inner layer. 
  
  \begin{theorem}\label{thm:decom}
    Let $k\geq 0$ be a given integer and the data of \eqref{eq:Lu} smooth enough. Then it holds
    \[
      u=S+E+W,
    \]
    where 
    for any $\ell\in\{0,1,\dots,k\}$ it holds
    \begin{align*}
      \norm{S^{(\ell)}}{L_2(0,1)}+\norm{S^{(\ell)}}{L_2(1,2)} & \lesssim 1,&
      |E^{(\ell)}(x)|&\lesssim \eps^{-\ell}\e^{-\beta \frac{x}{\eps}},\quad x\in[0,2],\\
      &&|W^{(\ell)}(x)|&\lesssim \begin{cases}
                                   0,&x\in(0,1),\\
                                   \eps^{1-\ell}\e^{-\beta \frac{(x-1)}{\eps}},& x\in(1,2).
                                 \end{cases}
    \end{align*}
  \end{theorem}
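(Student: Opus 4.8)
The plan is to represent $u$ on each of the two subintervals by a truncated asymptotic expansion in $\eps$, to collect the resulting contributions into the smooth part $S$, the outflow boundary layer $E$ at $x=0$ and the weak inner layer $W$ at $x=1$, and then to control the remainder by the stability estimate of Theorem~\ref{thm:stability}. Since the coefficients are constant, every reduced equation is a linear first-order ODE and every layer equation is integrable in closed form, so all functions below are explicitly available, which makes the pointwise layer bounds accessible.

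First I would build $S=\sum_{j=0}^{k}\eps^j S_j$. Substituting into $Lu:=-\eps u''-bu'+cu+d\,u(\cdot-1)$ gives $-bS_0'+cS_0+dS_0(\cdot-1)=f$ and $-bS_j'+cS_j+dS_j(\cdot-1)=S_{j-1}''$ for $j\ge1$. On $(0,1)$ the shift acts on the known datum $\Phi$, whereas on $(1,2)$ it couples back to the values on $(0,1)$; I would treat this as one coupled first-order system, closing it by the single retained boundary condition $S_j(2)=0$ — the layer sits at the outflow point $x=0$ — together with an interface condition at $x=1$ fixed jointly with the inner layer (continuity for $S_0$, a controlled jump for $j\ge1$, see below). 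Since the $S_j$ are $\eps$-independent and smooth on each subinterval, $\norm{S^{(\ell)}}{L_2(0,1)}+\norm{S^{(\ell)}}{L_2(1,2)}\lesssim1$ follows at once; the bound is stated on the two subintervals separately because the source $f-dS_0(\cdot-1)$ jumps by $-dS_0(0)$ across $x=1$ and makes $S_0'$ jump there by $\tfrac{d}{b}S_0(0)$.

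Next I would add the boundary layer $E=\sum_{j=0}^{k}\eps^j E_j$ at $x=0$ to enforce $u(0)=0$ exactly; its leading part is $-S_0(0)\,\e^{-bx/\eps}$ and each $E_j$ satisfies $|E_j^{(\ell)}(x)|\lesssim\eps^{-\ell}\e^{-\beta x/\eps}$, so the bound for $E$ holds on $[0,2]$ and is automatically negligible on $(1,2)$, where $\e^{-\beta x/\eps}\le\e^{-\beta/\eps}$. The delicate object is the inner layer $W$ at $x=1$, which I take to vanish on $(0,1)$. On its own the shifted layer source $-d\,E(\cdot-1)$ is an $\ord{1}$-amplitude layer on $(1,2)$ and would create an $\ord{1}$ layer; but $u_1$ has no layer near $x=1$, so $u_1'(1^-)=\ord{1}$, and the matching $u_1'(1^-)=u_2'(1^+)$ pushes the total amplitude at $x=1$ down to $\ord{\eps}$. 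Equivalently, $W$ must cancel the $\ord{1}$ jump of $S_0'$ while vanishing from the left: an ansatz $W(x)\sim A\,\e^{-b(x-1)/\eps}$ has $W'(1^+)=-\tfrac{b}{\eps}A$, so correcting an $\ord{1}$ derivative jump forces $A=\ord{\eps}$. This near-cancellation between the particular layer driven by $-d\,E(\cdot-1)$ and the homogeneous layer fixing the interface value is exactly the extra factor in $|W^{(\ell)}(x)|\lesssim\eps^{1-\ell}\e^{-\beta(x-1)/\eps}$. Since such a $W$ deposits an $\ord{\eps}$ value at $x=1$ while vanishing from the left, I would restore exact continuity by granting $S_1$ (and, order by order, the higher $S_j$) a matching $\ord{1}$ jump at $x=1$; using the inner-layer amplitudes for the derivative match and these smooth-part jumps for the value match, the truncated sum $S+E+W$ can be made exactly $C^1$ at $x=1$.

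Finally I would estimate $R:=u-(S+E+W)$. By construction $LR$ reduces to the truncated $\ord{\eps^{k+1}}$ interior terms, $R$ vanishes at $x=0$, is exponentially small at $x=2$, and is $C^1$ across $x=1$ with no jump; hence $R$ fits the two-interval, $C^1$-matched setting of Theorem~\ref{thm:stability} with $\delta=0$, which gives $\norm{R}{L^\infty}\lesssim\eps^{k+1}$. For the derivative bounds I would show that $R$ itself decomposes into a smooth part and layers at $x=0$ and $x=1$ carrying an extra factor $\eps^{k+1}$ — either by re-applying the construction to $R$ or directly from the Green's function estimates of Appendix~\ref{app:a} — after which $R$ and its derivatives up to order $k$ are absorbed into $S$, $E$ and $W$ (the absorption into $W$ uses $\eps^{k+1}\lesssim\eps$, valid for $k\ge0$). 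I expect the two hardest points to be making the weak $\ord{\eps}$ character of $W$ rigorous through the above cancellation, and upgrading the $L^\infty$ remainder control from Theorem~\ref{thm:stability} to pointwise bounds on the derivatives of $R$.
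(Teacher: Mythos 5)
Your overall strategy coincides with the paper's: an outer expansion $\sum_j\eps^j S_j$ integrated backwards from $x=2$, a boundary-layer correction at the outflow point $x=0$, a weak inner layer at $x=1$ whose $\ord{\eps}$ amplitude is forced by the fact that it only has to repair the $\ord{1}$ derivative jump $\jump{S_0'(1)}=\tfrac{d}{b}S_0(0)$ (your scaling argument $W'(1^+)=-\tfrac{b}{\eps}A$ is exactly the mechanism the paper implements by starting the inner ansatz at order $\eps^1$), the order-by-order transfer of the resulting $\ord{\eps^j}$ value jumps into interface jumps of the $S_j$, and finally control of the remainder via the stability result of Theorem~\ref{thm:stability}. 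The difficulty you flag at the end --- upgrading the $L^\infty$ bound on $R$ to bounds on its derivatives before absorbing $R$ into $S$ --- is left equally implicit in the paper, which simply sets $S:=\tilde S+R$.

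The one genuine gap is that you take the solvability of the reduced problems for granted (``every reduced equation is a linear first-order ODE \dots integrable in closed form''). It is not automatic: on $(1,2)$ the equation for $S_{j,+}$ contains $S_{j,-}(\cdot-1)$ of the \emph{same} order, and the only true boundary condition is $S_{j,+}(2)=0$; the value $S_{j,-}(1)$ needed to integrate $S_{j,-}$ backwards on $(0,1)$ equals $S_{j,+}(1)$, which in turn depends on $S_{j,-}$ through the shift. This is an implicit, non-local closure of a coupled first-order system, and existence reduces to a scalar fixed-point equation of the form $W(0)=\tilde b_{21}\bigl(W(0)+\alpha\bigr)+T_2(0)$, solvable iff $\tilde b_{21}\neq 1$. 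The paper devotes Lemma~\ref{lem:exist} to verifying $\tilde b_{21}<0$ under the hypothesis $d>0$, and the remark following it exhibits data ($c_1=c_2=1$, $d=-\e$) for which no solution exists at all. This is precisely where the sign condition on $d$ enters the decomposition, so your construction of the $S_j$ needs an explicit argument at this point rather than an appeal to closed-form integration.
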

  \begin{proof}
    We prove this theorem using asymptotic expansions. For simplicity we assume $b,c$ and $d$ to be constant. 
    Adjusting the proof for variable smooth coefficients is straightforward using Taylor expansions and assuming 
    Theorem~\ref{thm:stability} to hold true for variable coefficients.
    
    We start by writing the problem using $u_1$ and $u_2$ as the solution on $(0,1)$ and $(1,2)$ resp.
    \begin{align*}
      -\eps u_1''(x)-bu_1'(x)+cu_1(x) &= f(x)-d\Phi(x -1),\quad x\in(0,1),\\
      -\eps u_2''(x)-bu'_2(x)+cu_2(x) &= f(x)-du_1(x -1),\quad x\in(1,2),\\
      u_1(0)  = 0,\quad
      u_1(1) &= u_2(1),\quad
      u_1'(1) = u_2'(1),\quad
      u_2(2)  = 0.
    \end{align*}
    Let
    $\sum_{i=0}^k\eps^i (S_{i,-}\chi_{[0,1)}+S_{i,+}\chi_{[1,2]})$ be the outer expansion and by
    substituting this into the differential system we obtain
    \begin{align*}
      \sum_{i=0}^k\eps^i\left( -\eps S_{i,-}''(x)-bS_{i,-}'(x)+cS_{i,-}(x) \right)&=f(x)-d\Phi(x-1),\,x\in(0,1),\\
      \sum_{i=0}^k\eps^i\left( -\eps S_{i,+}''(x)-bS_{i,+}'(x)+cS_{i,+}(x) \right)&=f(x)-d\sum_{i=0}^k\eps^iS_{i,-}(x-1),\,x\in(1,2)      
    \end{align*}
    plus boundary conditions and continuity conditions. For the coefficient of $\eps^0$ (including some of the additional conditions) we obtain 
    \begin{align*}
      -b S_{0,-}'(x)+cS_{0,-}(x)&=f(x)-d\Phi(x-1),\,x\in(0,1),     & S_{0,-}(1) &= S_{0,+}(1),\\
      -b S_{0,+}'(x)+cS_{0,+}(x)&=f(x)-dS_{0,-}(x-1),\,x\in(1,2),  & S_{0,+}(2) &= 0.
    \end{align*}
    According to Lemma~\ref{lem:exist}, after mapping the second line to $(0,1)$, there exists a solution $S_0=S_{0,-}\chi_{[0,1)}+S_{0,+}\chi_{[1,2]}$, that is continuous
    and $S_0(2)=0$, but $S_0(0)\neq 0$. Thus, we correct this with a boundary correction using the stretched variable $\xi=\frac{x}{\eps}$
    and $\sum_{i=0}^k\eps^i\tilde E_i(\xi)$. Substituting this into the differential equation yields
    \[
      \sum_{i=0}^k\eps^i\left(-\eps^{-1}(\tilde E_i''(\xi) +b\tilde E_i'(\xi)) +c\tilde E_i(\xi))+d\tilde E_i\left(\xi-\frac{1}{\eps}\right)\chi_{(\frac{1}{\eps},\frac{2}{\eps})}\right)=0.
    \]
    We deal with the shift term later and obtain for the coefficient of $\eps^{-1}$ the boundary correction problem
    \begin{align*}
      \tilde E_0''(\xi) +b\tilde E_0'(\xi) &=0,\quad \tilde E_0(0)=-S_{0,-}(0),\,\lim_{\xi\to\infty}\tilde E_0(\xi)=0
      \quad\Rightarrow\quad
      \tilde E_0(\xi)=-S_{0,-}(0)\e^{-b\xi}.
    \end{align*}
    Furthermore, we correct the jump of the derivative of $S_0$ at $x=1$ with an inner expansion and the variable $\eta=\frac{x-1}{\eps}$.
    Using $\sum_{i=1}^{k+1}\eps^i \tilde W_i(\eta)$ we have
    \[
      \sum_{i=1}^{k+1}\eps^i\left(-\eps^{-1}(\tilde W_i''(\eta) +b\tilde W_i'(\eta)) +c\tilde W_i(\eta)\right)=0.
    \]
    For the coefficient of $\eps^{0}$ and initial conditions at $\eta=0$ it follows
    \[
      \tilde W_1''(\eta) +b\tilde W_1'(\eta) =d\tilde E_0(\eta),\quad \tilde W_1'(0)=-\jump{S_0'(1)},\,\lim_{\eta\to\infty}\tilde W_1(\eta)=0.
    \]
    Here we included the shift of $\tilde E_0$ into the differential equation. We obtain a solution
    \[
      \tilde W_1(\eta)=\tilde Q_1(\eta)\e^{-b\eta}
    \]
    where $\tilde Q_1$ a polynomial of degree 1.    
    Thus far we have
    \[
      u_0=S_0+E_0+\eps W_1\chi_{[1,2]},\,u_0(0)=0,\,|u_0(2)|\lesssim \e^{-\frac{b}{\eps}}
    \]
    and in addition
    \[
      \jump{u_0'(1)}=0,\quad \jump{u_0(1)}=W_1(1)\lesssim \eps.
    \]
    Thus we have corrected the jump in the derivative, but introduced a jump in the function value of order $\eps$.
    In order to correct this jump we continue with the same steps, now for the coefficients of $\eps^i$ for $i>0$.
    We obtain the problems
    \begin{align*}
      -b S_{i,-}'(x)+cS_{i,-}(x)&=S_{i-1,-}''(x),\,x\in(0,1),& S_{i,-}(1) &= S_{i,+}(1)-W_i(1)\\
      -b S_{i,+}'(x)+cS_{i,+}(x)&=S_{i-1,+}''(x)-dS_{i,-}(x-1),\,x\in(1,2),& S_{i,+}(2) &= 0\\
      \stackrel{Lemma~\ref{lem:exist}}{\Longrightarrow}\quad S_i&=S_{i,-}\chi_{[0,1)}+S_{i,+}\chi_{[1,2]},
    \end{align*}
    and
    \begin{align*}
      \tilde E_{i}''(\xi) +b\tilde E_{i}'(\xi) &=c\tilde E_{i-1}(\xi),& \tilde E_{i}(0)&=-S_{i,-}(0),\,\lim_{\xi\to\infty}\tilde E_i(\xi)=0\\
      \Longrightarrow\quad \tilde E_i(\xi) &=\tilde P_i(\xi)\e^{-b\xi},\\
      \tilde W_{i+1}''(\eta) + b\tilde W_{i+1}'(\eta) &= c\tilde W_{i}(\eta)+d\tilde E_{i}(\eta), &\tilde W_{i+1}'(0)&=-\jump{S_i'(1)},\,\lim_{\eta\to\infty}\tilde W_{i+1}(\eta)=0\\
      \Longrightarrow\quad \tilde W_{i+1}(\eta) &=\tilde Q_{i+1}(\eta)\e^{-b\eta},
    \end{align*}
    where $\tilde P_i$ and $\tilde Q_{i+1}$ are polynomials of degree $i$ and $i+1$, resp. 
    The following Figure~\ref{fig:dep}
    \begin{figure}[htb]
      \begin{center}
        \begin{tikzpicture}[->,>=stealth',shorten >=1pt,node distance=0.7cm, semithick]
            \node (V0)                                   {$S_0$};
            \node (Z1) [below right =of V0]              {$\tilde W_1$};
            \node (W0) [below left  =of Z1]              {$\tilde E_0$};
            \node[node distance=1cm]
                  (V1) [right       =of V0]              {$S_1$};
            \node (Z2) [below right =of V1]              {$\tilde W_2$};
            \node (W1) [below left  =of Z2]              {$\tilde E_1$};
            \node[node distance=1cm]
                  (V2) [right       =of V1]              {$S_2$};
            \node (Z3) [below right =of V2]              {$\tilde W_3$};
            \node (W2) [below left  =of Z3]              {$\tilde E_2$};
            \node (V3) [right       =of V2]              {$\dots$};
            \node (Z4) [right       =of Z3]              {$\dots$};
            \node (W3) [right       =of W2]              {$\dots$};
            \path[dotted,blue] (V0) edge (Z1);
            \path[dotted,red]  (V0) edge (W0);
            \path              (W0) edge (Z1);
            \path              (V0) edge (V1);
            \path              (W0) edge (W1);
            \path[dotted,red]  (Z1) edge (V1);
            \path              (Z1) edge (Z2);
            \path[dotted,blue] (V1) edge (Z2);
            \path[dotted,red]  (V1) edge (W1);
            \path              (W1) edge (Z2);
            \path              (V1) edge (V2);
            \path              (W1) edge (W2);
            \path[dotted,red]  (Z2) edge (V2);
            \path              (Z2) edge (Z3);
            \path[dotted,blue] (V2) edge (Z3);
            \path[dotted,red]  (V2) edge (W2);
            \path              (W2) edge (Z3);
        \end{tikzpicture}
      \end{center}
      \caption{Dependence graph of the problems in the solution decomposition.\label{fig:dep}}
    \end{figure}
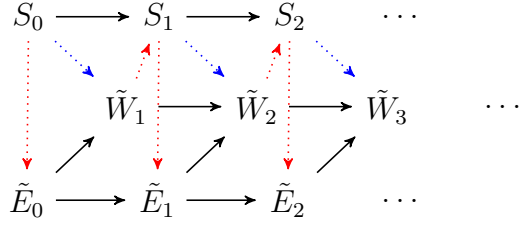
    shows in a diagram the dependence of the problems. 
    Dotted lines represent influence on boundary values, while solid ones are via the differential equation.
    
    Thus, for the expansion
    \[
      u_k:=\underbrace{\sum_{i=0}^k\eps^i S_i(x)}_{=:\tilde S(x)}
           +\underbrace{\sum_{i=0}^k\eps^iP_i\left( \frac{x}{\eps} \right)\e^{-\frac{bx}{\eps}}}_{=:E(x)}
           +\underbrace{\sum_{i=1}^{k+1}\eps^iQ_i\left( \frac{x-1}{\eps} \right)\e^{-\frac{b(x-1)}{\eps}}\chi_{[1,2]}(x)}_{=:W(x)}
    \]
    we have
    \[
      \jump{u_k(1)}=:\delta,\quad
      \jump{u_k'(1)}=0,\quad 
      u_k(0)=0,\quad
      u_k(2)=:\beta,
    \]
    where
    \[
      |\delta|\lesssim \eps^k\qmbox{and}
      |\beta|\lesssim \e^{-\frac{b}{\eps}},
    \]
    and for the remainder $R:=u_k-u$ follows the same. Finally, it holds
    \begin{align*}
      -\eps R''-bR'+cR
        &= \eps^k(S''_{k,-}+ce_k),\,\text{in }(0,1)\\
      -\eps R''-bR'+cR
        &= \eps^k(S''_{k,+}+ce_k+cw_{k+1})-dR(\cdot-1),\,\text{in }(1,2).
    \end{align*}
    Using the stability result of Theorem~\ref{thm:stability} we obtain
    \[
      \norm{R}{L^\infty}\lesssim \eps^k
    \]
    and we can set 
    \[
      S:=\tilde S+R.\qedhere
    \]
  \end{proof}

  \begin{lemma}\label{lem:exist}
    The ordinary differential system
    \begin{align*}
      -V'(x)+c_1(x)V(x)\hspace{2.3cm}&=g_1(x),\,x\in(0,1),\quad V(1)=W(0)+\alpha,\\
      -W'(x)+c_2(x)W(x)+d(x)V(x)&=g_2(x),\,x\in(0,1),\quad W(1)=0
    \end{align*}
    has for positive $d$ and any $c_1,c_2,g_1,g_2,\alpha$
    a unique solution.
  \end{lemma}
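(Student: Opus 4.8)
The plan is to treat this as a linear two-point boundary value problem for the coupled first-order system consisting of $V'=c_1V-g_1$ and $W'=c_2W+dV-g_2$ on $(0,1)$, subject to the two non-separated boundary conditions $V(1)-W(0)=\alpha$ and $W(1)=0$. The associated homogeneous system ($g_1=g_2=0$) has a two-dimensional solution space, and we impose exactly two boundary conditions, so by the Fredholm alternative for linear ODE systems existence and uniqueness are equivalent. It therefore suffices to show that the homogeneous problem with $\alpha=0$ admits only the trivial solution $V\equiv W\equiv 0$.

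For this I would use a shooting approach and parametrise by $A:=V(1)$. The first equation does not involve $W$, so the backward initial value problem $-V'+c_1V=g_1$, $V(1)=A$, has a unique solution on $[0,1]$ depending affinely on $A$; splitting off the homogeneous part gives $V=V_p+AV_h$ with $V_h'=c_1V_h$, $V_h(1)=1$, so that $V_h(x)=\exp\!\big(-\int_x^1 c_1(s)\,\ds\big)>0$ on $[0,1]$. Feeding this $V$ into the second equation, the backward problem $-W'+c_2W+dV=g_2$, $W(1)=0$, again has a unique solution $W=W_0+AW_1$, where $W_1$ solves $W_1'=c_2W_1+dV_h$, $W_1(1)=0$. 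The remaining coupling condition $V(1)=W(0)+\alpha$ then collapses to the scalar linear equation
\[
  \bigl(1-W_1(0)\bigr)\,A = W_0(0)+\alpha,
\]
which is uniquely solvable for $A$ precisely when $W_1(0)\neq 1$; back-substitution then recovers a unique pair $(V,W)$.

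The crux of the argument, and the only place where the sign hypothesis $d\geq 0$ is used, is thus to verify $W_1(0)\neq 1$. Solving the backward problem for $W_1$ with the integrating factor $\exp(-\int_0^{\,\cdot} c_2)$ gives
\[
  W_1(0) = -\int_0^1 \exp\!\Big(-\int_0^t c_2(s)\,\ds\Big)\, d(t)\,V_h(t)\,\dt.
\]
Since $V_h>0$, the exponential factor is positive, and $d\geq 0$, the integrand is nonnegative, whence $W_1(0)\le 0<1$ and $1-W_1(0)\geq 1>0$. I expect this sign computation to be the essential point rather than a technical obstacle: for $d$ of the opposite sign the reduced coefficient $1-W_1(0)$ could vanish and uniqueness would break down, so the positivity of $d$ is exactly what closes the argument, while the remaining steps (solving the scalar linear ODEs and tracking the affine dependence on $A$) are routine.
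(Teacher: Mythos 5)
Your proof is correct. The core strategy coincides with the paper's: exploit the triangular structure to reduce the self-referential boundary condition to one scalar linear equation for a single unknown, and show its coefficient cannot vanish because $d\geq 0$ forces a definite sign. The execution differs, though, in a way worth recording. The paper writes the solution of the first-order system as $\e^{B(x)}\bigl(\e^{-B(1)}(W(0)+\alpha,0)^T+T(x)\bigr)$ with $B(x)=\int_0^x A(y)\dy$, and identifies the critical coefficient as the $(2,1)$-entry $\tilde b_{21}$ of $\e^{-B(1)}$, computed explicitly and shown to be negative. You instead shoot from $x=1$ with parameter $A=V(1)=W(0)+\alpha$, solve the two scalar equations successively by integrating factors, and land on the condition $W_1(0)\neq 1$ with
\[
  W_1(0)=-\int_0^1 \exp\Bigl(-\int_0^t c_2(s)\,\ds\Bigr)\,d(t)\,V_h(t)\,\dt\;\leq\;0,
\]
which for constant coefficients reduces exactly to the paper's $-D\e^{-C_1}$ resp.\ $D(\e^{-C_1}-\e^{-C_2})/(C_1-C_2)$, so your $W_1(0)$ is precisely the paper's $\tilde b_{21}$. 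Your route buys two things. First, $\e^{B(x)}$ is the fundamental matrix of $y'=A(x)y$ only when $A(x)$ commutes with $B(x)$, which fails for generic variable $c_1,c_2,d$; your scalar integrating-factor computation sidesteps this entirely and is valid verbatim for variable coefficients. Second, your argument visibly needs only $d\geq 0$, consistent with the paper's subsequent remark that positivity of $d$ is sufficient but not necessary. The opening appeal to the Fredholm alternative is dispensable, since your shooting construction already delivers existence and uniqueness simultaneously, but it does no harm.
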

  \begin{proof}
    For $x\in(0,1)$ the system can be written as
    \[
      \pmtrx{V\\W}'(x) = \pmtrx{c_1(x)&0\\d(x)&c_2(x)}\pmtrx{V\\W}(x)-\pmtrx{g_1(x)\\g_2(x)},
      \quad
      \pmtrx{V\\W}(1)=\pmtrx{W(0)+\alpha\\0}
    \]
    or short
    \[
      \pmtrx{V\\W}' = A\pmtrx{V\\W} - g,
      \quad
      \pmtrx{V\\W}(1)=\pmtrx{W(0)+\alpha\\0}.
    \]
    With $B(x)=\int_0^x A(y)\dy$ and the matrix exponential, the solution can be represented as
    \[
      \pmtrx{V\\W}(x)=\e^{B(x)}\bigg( \e^{-B(1)}\pmtrx{W(0)+\alpha\\0}
      +\underbrace{\int_x^1\e^{-B(y)}g(y)\dy}_{=:T(x)} \bigg).
    \]
    Now, this solution is still recursively defined. In order to investigate this further, let
    \[
      B(1)=\pmtrx{C_1&0\\D&C_2},\text{ where }
      D:=\int_0^1d(x)\dx,\,
      C_i=\int_0^1 c_i(d)\dx,\,i\in\{1,2\},
    \]
    $\tilde b_{21}$ be the 2,1-component of $\e^{-B(1)}$ and $T_2$ the second component of $T$.
    Then we have
    \[
      W(0) = \tilde b_{21}(W(0)+\alpha)
      +T_2(0)
      \rarrow
      W(0) = \frac{T_2(0)+\tilde b_{21}\alpha}{1-\tilde b_{21}},
    \]
    if $\tilde b_{21}\neq 1$. Due to the assumption $d>0$ we have $D>0$ and therefore
    \[
      \tilde b_{21}:=\begin{cases}
                        -D \e^{-C_1},&C_1=C_2,\\
                         D\frac{\e^{-C_1}-\e^{-C_2}}{C_1-C_2},&C_1\neq C_2,
                      \end{cases}
    \]
    is always negative and thus not 1. 
  \end{proof}
  \begin{remark}
    The condition $d>0$ is sufficient, but not necessary. But some condition is needed,
    as can be seen by the example $c_1=c_2=1,\,d=-\e,\,\alpha=0$ 
    and for example $g_1(x)=g_2(x)=1$ 
    for which no solution $(V,W)$ exists:
    \[
      V(x)=1 + (W(0)-1)\e^{x-1},\,
      W(x)=\e^x\cdot\left((1-x)W(0)+x-2-\e^{-1}\right)  + \e + 1
    \]
    fulfils the system and the conditions $W(1)=0$, $V(1)=W(0)$, but
    \[
      W(0)=W(0)-1+\e-\e^{-1}
    \]
    is not defined. 
  \end{remark}
  \begin{remark}
    The related problem
    \begin{align*}
      -\eps u''(x)-b(x) u'(x)+c(x) u(x)+d(x) u(x+1)&=f(x),\quad x\in\Omega:=(0,2),\\
      u(0) &= 0,\\
      u(x) &= \Phi(x),\quad x\in[2,3),
    \end{align*}
    where the directions of shift and convection are opposing,
    can be analysed quite similarly, yielding the same solution decomposition as Theorem~\ref{thm:decom}.
    Here the reduced problems are always solvable, independent of $d$, but the problems 
    for the boundary correction have to be split into the two subregions.
  \end{remark}

  \section{Numerical analysis}\label{sec:NumAna}
  \subsection{Preliminaries}
  Using standard $L_2$-products and 
  integration by parts we define our bilinear and linear form by
  \begin{align}
    B(u,v)&:=\eps\scp{u',v'}_{\Omega}+\scp{cu-bu',v}_{\Omega}+\scp{du(\cdot -1),v}_{(1,2)}\notag\\
          &= \scp{f,v}_{\Omega}-\scp{d\phi(\cdot -1),v}_{(0,1)}=:F(v)
  \end{align}
  for $u,v\in \U$. With
  \begin{align*}
    -\scp{bu',u}_{\Omega}&=\scp{b'u,u}_{\Omega}+\scp{bu',u}_{\Omega}
    \intertext{and}
    \scp{du(\cdot-1),u}_{(1,2)}&\leq \frac{\norm{d}{L_\infty(1,2)}}{2}\left( \norm{u}{L_2(0,1)}^2+\norm{u}{L_2(1,2)}^2 \right)
    =\frac{\norm{d}{L_\infty(1,2)}}{2}\norm{u}{L_2}^2
  \end{align*}
  we have coercivity  w.r.t. the energy norm $\tnorm{\cdot}$
  \begin{align*}
    B(u,u) &= \eps\norm{u'}{L_2}^2+\scp{cu-bu',u}_{\Omega}+\scp{du(\cdot-1),u}_{(1,2)}\\
            &\geq \eps\norm{u'}{L_2}^2+ \scp{\left( c-\frac{b'}{2} \right)u,u}_{\Omega}-\frac{\norm{d}{L_\infty(1,2)}}{2}\norm{u}{L_2}^2\\
            &\geq \eps\norm{u'}{L_2}^2 + \gamma \norm{u}{L_2}^2=:\tnorm{u}^2
  \end{align*}
  due to our assumptions on the data.
  
  \subsection{On standard S-type meshes}
  For the construction of an S-type mesh, see \cite{RL99}, let us assume the number of cells $N$ to be divisible by 4.
  Next we define a mesh transition value
  \[
    \lambda = \frac{\sigma\eps}{\beta}\ln(N),
  \]
  with a constant $\sigma$ to be specified later. In order to have an actual layer we assume $\eps$ to be small enough. To be more precise, we assume
  \[
    \frac{\sigma\eps}{\beta}\ln(N)\leq \frac{1}{2}
  \]
  such that $\lambda\leq 1/2$ follows.
  
  Then using a monotonically increasing mesh defining function $\phi$ with $\phi(0)=0$ and $\phi(1/2)=\ln(N)$,
  see \cite{RL99} for the precise conditions on $\phi$,
  we construct the mesh nodes
  \[
    x_i=\begin{cases}
           \displaystyle\frac{\sigma\eps}{\beta}\phi\left(\frac{2i}{N}\right),&           \displaystyle0\leq i \leq \frac{N}{4},\\[1ex]
           \displaystyle\frac{4i}{N}(1-\lambda)+2\lambda-1,                    & \displaystyle\frac{N}{4}\leq i \leq \frac{N}{2},\\[1ex]
          \displaystyle 1+x_{i-N/2},                                           & \displaystyle\frac{N}{2}\leq i \leq N.
        \end{cases}
  \]
  Let us denote the smallest mesh-width inside the layers by $h$, for which holds $h\leq \eps$.
  Associated with $\phi$ is the mesh characterising function $\psi=\e^\phi$, that classifies the convergence quality
  of the meshes by the quantity $\max|\psi'|:=\max\limits_{t\in[0,1/2]}|\psi'(t)|$. Two of the most common S-type meshes are
  the Shishkin mesh with
  \[
    \phi(t) = 2t\ln N,\quad
    \psi(t) = N^{-2t},\quad
    \max|\psi'| =2\ln N
  \]
  and the Bakhvalov-S-mesh
  \[
    \phi(t)=-\ln(1-2t(1-N^{-1})),\quad
    \psi(t)=1-2t(1-N^{-1}),\quad
    \max|\psi'|= 2.
  \]
  By definition it holds
  \[
    |E(\lambda)|\lesssim  N^{-\sigma}
    \quad\text{and}\quad
    |W(1+\lambda)|\lesssim  \eps N^{-\sigma}.
  \]
  As discrete space we use 
  \[
    \U_N:=\{v\in H_0^1(\Omega):v|_\tau\in\PS_k(\tau)\},
  \]
  where $\PS_k(\tau)$ is the space of polynomials of degree $k$ at most on a cell $\tau$ of the mesh.
  Let $I$ be the standard Lagrange-interpolation operator into $\U_N$ using equidistant points or any other suitable 
  distribution of points. The derivation of the interpolation error can be done like for a standard convection-diffusion 
  problem, see e.g. \cite{RST08}. We therefore skip the proof.
  \begin{lemma}[Interpolation error estimates]\label{lem:inter}
    For $\sigma\geq k+1$, $u=S+E+W$ assuming the solution decomposition and the Lagrange interpolation operator $I$ it holds
    \begin{align*}
      \norm{u-Iu}{L^2(\Omega)}&\lesssim  (h+N^{-1}\max|\psi'|)^{k+1},\\
      \norm{(u-Iu)'}{L^2(\Omega)}&\lesssim  \eps^{-1/2}(h+N^{-1}\max|\psi'|)^k
    \end{align*}
    and additionally
    \begin{align*}
      \norm{E-IE}{L^2((0,\lambda)\cup(1,1+\lambda))}&\lesssim \eps^{1/2}(N^{-1}\max|\psi'|)^{k+1},\\
      \norm{E-IE}{L^2((\lambda,1)\cup(1+\lambda,2))}&\lesssim N^{-(k+1)},\\
      \norm{(W-IW)'}{L^2(\Omega)}&\lesssim \eps^{1/2}(N^{-1}\max|\psi'|)^k.
    \end{align*}
  \end{lemma}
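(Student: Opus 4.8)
The plan is to use linearity of the interpolation operator together with the solution decomposition of Theorem~\ref{thm:decom}, writing $u-Iu=(S-IS)+(E-IE)+(W-IW)$ and estimating each summand on each mesh cell $\tau=[x_{i-1},x_i]$ of width $h_i$ by the classical cellwise Lagrange bounds
\[
  \norm{v-Iv}{L^2(\tau)}\lesssim h_i^{k+1}\norm{v^{(k+1)}}{L^2(\tau)},\qquad
  \norm{(v-Iv)'}{L^2(\tau)}\lesssim h_i^{k}\norm{v^{(k+1)}}{L^2(\tau)},
\]
combined with two mesh facts from \cite{RL99}: in the coarse part $h_i\lesssim N^{-1}$, while in the fine part $h_i\lesssim\eps N^{-1}\max|\psi'|$. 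For the smooth part I would apply Theorem~\ref{thm:decom} with the integer $k+1$ to get $\norm{S^{(k+1)}}{L_2(0,1)}+\norm{S^{(k+1)}}{L_2(1,2)}\lesssim 1$ and, with $\max_i h_i\lesssim N^{-1}$, obtain $\norm{S-IS}{L^2}\lesssim N^{-(k+1)}$ and $\norm{(S-IS)'}{L^2}\lesssim N^{-k}$, both dominated by the asserted rates.

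For the boundary layer $E$ I would split the domain at the transition points. On the fine subintervals $(0,\lambda)$ and $(1,1+\lambda)$ I insert $\norm{E^{(k+1)}}{L^2(\tau)}^2\lesssim\eps^{-2(k+1)}\int_\tau\e^{-2\beta x/\eps}\dx$, sum the cellwise bounds using $h_i\lesssim\eps N^{-1}\max|\psi'|$, and use $\int_0^\lambda\e^{-2\beta x/\eps}\dx\lesssim\eps$ to reach $\norm{E-IE}{L^2((0,\lambda)\cup(1,1+\lambda))}\lesssim\eps^{1/2}(N^{-1}\max|\psi'|)^{k+1}$; the analogous derivative computation carries $h_i^{k}$ instead of $h_i^{k+1}$, i.e.\ one fewer power of $\eps$, turning $\eps^{1/2}$ into the $\eps^{-1/2}$ of the energy estimate. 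On the coarse subintervals $(\lambda,1)\cup(1+\lambda,2)$ the derivatives of $E$ are no longer usable, so instead I would exploit that $E$, and by $L^\infty$-stability of $I$ also $IE$, are pointwise of size $\e^{-\beta\lambda/\eps}=N^{-\sigma}\leq N^{-(k+1)}$, where the hypothesis $\sigma\geq k+1$ enters, giving $\norm{E-IE}{L^2((\lambda,1)\cup(1+\lambda,2))}\lesssim N^{-(k+1)}$. The weak layer $W$ is handled identically, but the extra factor $\eps$ in $|W^{(\ell)}|\lesssim\eps^{1-\ell}\e^{-\beta(x-1)/\eps}$ upgrades the $\eps^{-1/2}$ of the $E$-derivative estimate to the claimed $\eps^{1/2}(N^{-1}\max|\psi'|)^k$, with exponential smallness again covering the coarse part $(1+\lambda,2)$. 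Collecting the three contributions and noting that $h\leq\eps$ is dominated by $N^{-1}\max|\psi'|$ produces the two global estimates.

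The cellwise application of the interpolation inequality is routine; the step that genuinely requires care is the summation over the fine-layer cells, where the graded mesh-width bound $h_i\lesssim\eps N^{-1}\max|\psi'|$ must be combined with the exponential decay of $E^{(k+1)}$ (or $W^{(k+1)}$) so that $\sum_i h_i^{2(k+1)}\int_{\tau_i}|E^{(k+1)}|^2$ collapses to $\eps\,(N^{-1}\max|\psi'|)^{2(k+1)}$ uniformly in $\eps$ and across all admissible mesh-generating functions $\psi$. This is exactly the classical S-type-mesh interpolation argument of \cite{RL99,RST08}, which is why the paper cites it and omits the computation; the only extra bookkeeping here is tracking the boundary layer $E$ through the second fine region $(1,1+\lambda)$, where it is exponentially small, and preserving the weak layer's additional $\eps$-power throughout.
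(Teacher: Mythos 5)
Your route is the one the paper intends (the paper itself skips the proof, deferring to the standard convection--diffusion literature): split $u-Iu$ via the decomposition, treat $S$ with the cellwise Lagrange estimates and $\norm{S^{(k+1)}}{L_2}\lesssim 1$, treat the layers on the fine cells with the cellwise estimates and on the coarse cells with their pointwise smallness $\e^{-\beta\lambda/\eps}=N^{-\sigma}\leq N^{-(k+1)}$ plus $L^\infty$-stability of $I$, and carry the extra factor $\eps$ of $W$ through to upgrade $\eps^{-1/2}$ to $\eps^{1/2}$. All of that, including the bookkeeping for $E$ on the second fine region $(1,1+\lambda)$, is correct.

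There is, however, one step that fails as written: the ``mesh fact'' $h_i\lesssim\eps N^{-1}\max|\psi'|$ for \emph{every} cell of the fine region. This is true for the Shishkin mesh (equidistant fine cells) but false for a general S-type mesh, in particular for the Bakhvalov-S-mesh used in the paper's experiments: there $\max|\psi'|=2$ while the last fine cell has width $x_{N/4}-x_{N/4-1}=\frac{\sigma\eps}{\beta}\bigl(\phi(1/2)-\phi(1/2-2/N)\bigr)\sim\eps$, which exceeds $\eps N^{-1}\max|\psi'|\sim\eps N^{-1}$ by a factor of $N$. Hence you cannot first pull $h_i^{k+1}\lesssim(\eps N^{-1}\max|\psi'|)^{k+1}$ out of the sum and only afterwards integrate $\e^{-2\beta x/\eps}$ over $(0,\lambda)$; the cell width and the local size of the layer must be kept together on each cell. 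The standard repair from \cite{RL99} is the local bound
\[
  h_i\,\e^{-\beta x_i/(\sigma\eps)}
  =\frac{\sigma\eps}{\beta}\bigl(\phi(t_i)-\phi(t_{i-1})\bigr)\psi(t_i)
  \lesssim \eps N^{-1}\max|\psi'|,
\]
obtained from $\phi'=-\psi'/\psi$ and the monotonicity of $\psi$, combined with the bounded local ratio $\psi(t_{i-1})/\psi(t_i)\lesssim 1$ and $\sigma\geq k+1$ to absorb $\e^{-\beta x/\eps}=\bigl(\e^{-\beta x/(\sigma\eps)}\bigr)^{\sigma}$; this yields $h_i^{k+1}\norm{E^{(k+1)}}{L^\infty(\tau_i)}\lesssim(N^{-1}\max|\psi'|)^{k+1}$ cell by cell, after which your summation and the factor $\eps^{1/2}$ from $\int_0^\lambda\e^{-2\beta x/\eps}\,\dx\lesssim\eps$ go through. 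Your closing paragraph does say the width must be ``combined with the exponential decay,'' which is exactly this point, but the computation you actually describe does not implement it and, taken literally, proves the lemma only for the Shishkin mesh.
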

  The numerical method is now given by: Find $u_N\in \U_N$ such that for all $v\in \U_N$ it holds
  \begin{equation}\label{eq:method}
    B(u_N,v)=F(v).
  \end{equation}
  Obviously, we have immediately Galerkin orthogonality
  \[
    B(u-u_N,v)=0\quad\text{for all }v\in \U_N.
  \]
  Now the convergence of our method is easily shown.
  \begin{theorem}\label{thm:conv}
    For the solution $u$ of \eqref{eq:Lu} and the numerical solution $u_N$ of \eqref{eq:method} holds on an S-type mesh
    with $\sigma\geq k+1$
    \[
      \tnorm{u-u_N}\lesssim  (h+N^{-1}\max|\psi'|)^k.
    \]
  \end{theorem}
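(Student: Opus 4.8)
The plan is to use the standard splitting $u - u_N = (u - Iu) + (Iu - u_N) =: \eta + \chi$, where $\eta$ is the interpolation error and $\chi\in\U_N$ the discrete error. By the triangle inequality it then suffices to bound $\tnorm{\eta}$ and $\tnorm{\chi}$ separately. The first is immediate from Lemma~\ref{lem:inter}: since $\tnorm{\eta}^2=\eps\norm{\eta'}{L_2}^2+\gamma\norm{\eta}{L_2}^2$, inserting the derivative estimate $\norm{\eta'}{L_2}\lesssim\eps^{-1/2}(h+N^{-1}\max|\psi'|)^k$ (so that the weight $\eps$ cancels the $\eps^{-1}$) together with the $L_2$-estimate yields $\tnorm{\eta}\lesssim(h+N^{-1}\max|\psi'|)^k$.

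For the discrete part I would combine coercivity with the Galerkin orthogonality stated above. Since $\chi\in\U_N$, orthogonality gives $B(u_N,\chi)=B(u,\chi)$, hence
\[
  \tnorm{\chi}^2\le B(\chi,\chi)=B(Iu-u,\chi)=-B(\eta,\chi),
\]
so everything reduces to proving $|B(\eta,\chi)|\lesssim(h+N^{-1}\max|\psi'|)^k\,\tnorm{\chi}$. I would treat the four contributions to $B(\eta,\chi)$ separately: the diffusion term $\eps\scp{\eta',\chi'}$, the reaction term $\scp{c\eta,\chi}$, the shift term $\scp{d\eta(\cdot-1),\chi}_{(1,2)}$, and the convection term $-\scp{b\eta',\chi}$. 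The first three are routine: Cauchy–Schwarz together with $\eps^{1/2}\norm{\chi'}{L_2}\le\tnorm{\chi}$ and $\norm{\chi}{L_2}\le\gamma^{-1/2}\tnorm{\chi}$ absorbs all $\eps$-weights, and the $L_2$-interpolation estimate $\norm{\eta}{L_2}\lesssim(h+N^{-1}\max|\psi'|)^{k+1}$ (for the shift term using $\norm{\eta(\cdot-1)}{L_2(1,2)}=\norm{\eta}{L_2(0,1)}$) leaves a power to spare.

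The crux, and the step I expect to be the \emph{main obstacle}, is the convection term $\scp{b\eta',\chi}$, because a naive Cauchy–Schwarz would cost a factor $\eps^{-1/2}$ from $\norm{\eta'}{L_2}$ that cannot be absorbed. Here I would split $\eta=\eta_S+\eta_E+\eta_W$ along the decomposition $u=S+E+W$ and handle each piece differently. For the smooth part $S$ has $\eps$-uniformly bounded derivatives, so standard interpolation theory gives $\norm{\eta_S'}{L_2}\lesssim N^{-k}$ with no $\eps$-weight, and $\scp{b\eta_S',\chi}\lesssim N^{-k}\tnorm{\chi}$ follows at once. For the weak layer the dedicated estimate $\norm{(W-IW)'}{L_2}\lesssim\eps^{1/2}(N^{-1}\max|\psi'|)^k$ of Lemma~\ref{lem:inter} is already small enough to apply Cauchy–Schwarz directly.

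The delicate piece is the strong layer $\eta_E$. I would integrate by parts, $\scp{b\eta_E',\chi}=-\scp{b'\chi+b\chi',\eta_E}$ (boundary terms vanish since $\eta_E\in H^1_0$), bound $\scp{b'\chi,\eta_E}$ by the global $L_2$-estimate of $\eta_E$, and for $\scp{b\chi',\eta_E}$ split the domain into the fine layer region $(0,\lambda)\cup(1,1+\lambda)$ and the coarse remainder. On the fine region I would pair $\norm{\chi'}{L_2}\le\eps^{-1/2}\tnorm{\chi}$ with $\norm{E-IE}{L_2}\lesssim\eps^{1/2}(N^{-1}\max|\psi'|)^{k+1}$, so that the two factors $\eps^{\pm1/2}$ cancel; on the coarse region I would invoke a cellwise inverse inequality $\norm{\chi'}{L_2(\tau)}\lesssim N\norm{\chi}{L_2(\tau)}$ together with $\norm{E-IE}{L_2}\lesssim N^{-(k+1)}$ there, gaining $N\cdot N^{-(k+1)}=N^{-k}$. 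Collecting all contributions gives $|B(\eta,\chi)|\lesssim(h+N^{-1}\max|\psi'|)^k\,\tnorm{\chi}$, hence $\tnorm{\chi}\lesssim(h+N^{-1}\max|\psi'|)^k$, and the triangle inequality completes the proof.
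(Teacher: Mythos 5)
Your proposal is correct and follows essentially the same route as the paper's proof: coercivity plus Galerkin orthogonality, Cauchy--Schwarz for all terms except the convective contribution of the strong layer $E$, integration by parts there, and a fine/coarse splitting with the $\eps^{1/2}$-weighted $L^2$ bound on $(0,\lambda)\cup(1,1+\lambda)$ and an inverse inequality on the coarse part. The only difference is that you spell out details the paper leaves implicit (the $S$/$E$/$W$ splitting of the convection term and the $\scp{b'\chi,\eta_E}$ term from integration by parts), so no further changes are needed.
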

  \begin{proof}
    We start with a triangle inequality
    \[
      \tnorm{u-u_N}\leq \tnorm{u-Iu}+\tnorm{Iu-u_N}
    \]
    where the first term can be estimated by Lemma~\ref{lem:inter}. Let $\chi:=Iu-u_N\in\U_N$ and $\psi:=u-Iu$. Then
    coercivity and Galerkin orthogonality yield
    \begin{align*}
      \tnorm{\chi}^2  
        &\leq B(\eta,\chi)
          =\eps\scp{\eta',\chi'}_{\Omega}+\scp{c\eta-b\eta',\chi}_{\Omega}+\scp{d\eta(\cdot -1),\chi}_{(1,2)},\\
      &\lesssim  (h+N^{-1}\max|\psi'|)^k\tnorm{\chi}+\scp{b(E-IE),\chi'}_{\Omega},
    \end{align*}
    where Cauchy-Schwarz inequalities and the interpolation error estimates were used for all but the convection term including the strong layer,
    where integration by parts was applied.
    For the remaining term we decompose the resulting scalar product into fine and coarse regions.
    \begin{align*}
      |\scp{b(E-IE),\chi'}_\Omega|
        &\leq |\scp{b(E-IE),\chi'}_{(0,\lambda)\cup(1,1+\lambda)}|
              +|\scp{b(E-IE),\chi'}_{(\lambda,1)\cup(1+\lambda,2)}|\\
        &\lesssim \eps^{1/2}(N^{-1}\max|\psi'|)^k\norm{\chi'}{L^2((0,\lambda)\cup(1,1+\lambda))}
              +N^{-(k+1)}\norm{\chi'}{L^2((\lambda,1)\cup(1+\lambda,2))}\\
        &\lesssim (N^{-1}\max|\psi'|)^k\tnorm{\chi}
              +N^{-k}\norm{\chi}{L^2((\lambda,1)\cup(1+\lambda,2))},
    \end{align*}
    where an inverse inequality 
    was used. Combining the results finishes the proof.
  \end{proof}
  \begin{remark}
    We could have also used a different layer adapted mesh, like a Dur\'{a}n mesh, introduced in \cite{Duran}, modified to our problem.
    The proof of interpolation errors and finally convergence follows again the standard ideas.
  \end{remark}  

  \subsection{On a coarser mesh}
  Let us consider a mesh, see also \cite{Roos22} where a similar mesh is used for weak layers, 
  that resolves the weak layer not by an S-type mesh, but just by an even simpler equidistant mesh and a specially
  chosen transition point, while the strong layer is still resolved by an S-type. Thus let
  \[
    \lambda:=\frac{\sigma\eps}{\beta}\ln N\leq\frac{1}{2}
    \qmbox{and}
    \mu:=\frac{\eps^\frac{k-1}{k}}{\beta}\leq\frac{1}{2}
  \]
  that still implies the weak condition
  \[
    \eps\lesssim (\ln N)^{-1}.
  \]
  Note that in the case $k=1$ we set $\displaystyle \mu=\frac{1}{2}$.
  The by the same ideas as in the previous subsection 
  we construct the mesh nodes
  \[
    x_i=\begin{cases}
           \displaystyle \frac{\sigma\eps}{\beta}\phi\left(\frac{2i}{N}\right),&           \displaystyle 0\leq i \leq \frac{N}{4},\\[1ex]
           \displaystyle \frac{4i}{N}(1-\lambda)+2\lambda-1,                   & \displaystyle \frac{N}{4}\leq i \leq \frac{N}{2},\\[1ex]
          \displaystyle  1+\mu\left(\frac{4i}{N}-2\right),                     & \displaystyle \frac{N}{2}\leq i \leq \frac{3N}{4},\\[1ex]
           \frac{4i}{N}(1-\mu)+4\mu-2,                           & \displaystyle \frac{3N}{4}\leq i \leq N.
        \end{cases}
  \]
  Note that for $i\geq N/4$ it is always piecewise equidistant, independent of the choice of $\phi$.
  For the (minimal) mesh width in the different regions it holds
  \[
    h_1\lesssim \eps,\,
    H_1\sim N^{-1},\,
    h_2\sim N^{-1}\eps^{\frac{k-1}{k}}
    \qmbox{and}
    H_2\sim N^{-1}.
  \]
  The proof of the interpolation errors uses local interpolation error estimates, given on any cell 
  $\tau_i$ with width $h_i$ and $1\leq s \leq k+1$ and $1\leq t\leq k$ by
  \begin{subequations}
    \begin{align}
      \norm{v-Iv}{L^2(\tau_i)}   &\lesssim h_i^{s}\norm{v^{(s)}}{L^2(\tau_i)},\label{eq:inter:1}\\
      \norm{(v-Iv)'}{L^2(\tau_i)}&\lesssim h_i^{t}\norm{v^{(t+1)}}{L^2(\tau_i)},\label{eq:inter:2}
    \end{align}
  \end{subequations}
  for $v$ smooth enough. In principle it is similar to proving interpolation error estimates on S-type meshes
  but the different layout of the mesh makes some changes in the proof necessary.
  
  \begin{lemma}
    Let us assume $\sigma\geq k+1$ and 
    \begin{align}\label{ass:eps}
      \e^{-\eps^{-1/k}}\leq N^{1-k}.
    \end{align}
    Then it holds
    \begin{subequations}
      \begin{align}
        \norm{u-Iu}{L^2(\Omega)}&\lesssim (h_1+N^{-1}\max|\psi'|)^{k+1/2},\label{eq:intest:1}\\
        \tnorm{u-Iu}&\lesssim (h_1+N^{-1}\max|\psi'|)^k\label{eq:intest:2}
      \end{align}
      and more detailed 
      \begin{align}
        \norm{W-IW}{L^2(\Omega)} &\lesssim \eps^{1/2}N^{-k},\label{eq:intest:5}\\
        \norm{E-IE}{L^2((\lambda,1)\cup(1+\mu,2))} &\lesssim N^{-(k+1)},\label{eq:intest:4}\\
        \norm{E-IE}{L^2((0,\lambda)\cup(1,1+\mu))} &\lesssim \eps^{1/2}(N^{-1}\max|\psi'|)^k.\label{eq:intest:3}
      \end{align}
    \end{subequations}
  \end{lemma}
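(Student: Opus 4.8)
The plan is to substitute the decomposition $u=S+E+W$ of Theorem~\ref{thm:decom} and estimate each component separately on each of the four mesh regions, using the local estimates \eqref{eq:inter:1}--\eqref{eq:inter:2}; throughout I would apply the decomposition at the next order $k+1$, so that derivatives up to order $k+1$ of all three components obey the stated bounds. The smooth part is routine: since every cell has width $\lesssim N^{-1}$ and $\norm{S^{(\ell)}}{L^2}\lesssim1$, summing \eqref{eq:inter:1} with $s=k+1$ and \eqref{eq:inter:2} with $t=k$ over all cells yields $\norm{S-IS}{L^2(\Omega)}\lesssim N^{-(k+1)}$ and $\norm{(S-IS)'}{L^2(\Omega)}\lesssim N^{-k}$, both compatible with \eqref{eq:intest:1}--\eqref{eq:intest:2} once the derivative term carries its $\eps^{1/2}$ weight.

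For the strong layer $E$ I would first dispose of \eqref{eq:intest:4}: on $(\lambda,1)\cup(1+\mu,2)$ one has $x\ge\lambda$, hence $|E^{(\ell)}(x)|\lesssim\eps^{-\ell}\e^{-\beta\lambda/\eps}=\eps^{-\ell}N^{-\sigma}$, and together with the stability of $I$ and $\sigma\ge k+1$ this gives the $N^{-(k+1)}$ bound. On $(0,\lambda)$ the mesh is still of S-type, so the argument is exactly that of Lemma~\ref{lem:inter} (as for a standard convection-diffusion problem), producing $\norm{E-IE}{L^2(0,\lambda)}\lesssim\eps^{1/2}(N^{-1}\max|\psi'|)^{k+1}$ and $\norm{(E-IE)'}{L^2(0,\lambda)}\lesssim\eps^{-1/2}(N^{-1}\max|\psi'|)^k$; on $(1,1+\mu)$ the factor $\e^{-\beta x/\eps}\le\e^{-\beta/\eps}$ renders the contribution negligible. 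Relaxing the exponent $k+1$ to $k$ then gives \eqref{eq:intest:3}.

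The heart of the proof is the weak layer $W$, which lives only on $(1,2)$ and is resolved by the two equidistant, non-S-type pieces of width $h_2\sim N^{-1}\eps^{(k-1)/k}$ on $(1,1+\mu)$ and $H_2\sim N^{-1}$ on $(1+\mu,2)$; here the freedom to choose the differentiation order in \eqref{eq:inter:1}--\eqref{eq:inter:2} is essential. On the fine piece I would use order $s=k$ (resp.\ $t=k$): from $|W^{(\ell)}|\lesssim\eps^{1-\ell}\e^{-\beta(x-1)/\eps}$ one gets $\norm{W^{(k)}}{L^2(1,1+\mu)}\lesssim\eps^{3/2-k}$, and since $h_2^{k}\sim N^{-k}\eps^{k-1}$ the product lands exactly on $\eps^{1/2}N^{-k}$, which is \eqref{eq:intest:5} on that piece; the analogous derivative computation gives $\norm{(W-IW)'}{L^2(1,1+\mu)}\lesssim\eps^{-1/2}N^{-k}$, the weight $\eps^{1/2}$ cancelling the resulting $\eps^{-1/2}$. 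On the coarse piece $W$ is exponentially small, and this is exactly where assumption \eqref{ass:eps} enters: using the lowest order $s=t=1$ gives $\norm{W'}{L^2(1+\mu,2)}\lesssim\eps^{1/2}\e^{-\eps^{-1/k}}$ and $\norm{W''}{L^2(1+\mu,2)}\lesssim\eps^{-1/2}\e^{-\eps^{-1/k}}$, and \eqref{ass:eps} turns $\e^{-\eps^{-1/k}}$ into $N^{1-k}$, so that $H_2\norm{W'}{L^2(1+\mu,2)}\lesssim\eps^{1/2}N^{-k}$ and $\eps^{1/2}H_2\norm{W''}{L^2(1+\mu,2)}\lesssim N^{-k}$. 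Summing the two pieces proves \eqref{eq:intest:5} and the weak-layer part of the energy estimate.

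Finally I would assemble \eqref{eq:intest:1} and \eqref{eq:intest:2} by the triangle inequality applied to $u-Iu=(S-IS)+(E-IE)+(W-IW)$: the three $L^2$ contributions $N^{-(k+1)}$, $\eps^{1/2}(N^{-1}\max|\psi'|)^{k+1}$ and $\eps^{1/2}N^{-k}$ combine to the right-hand side of \eqref{eq:intest:1}, while for the energy norm every term, weighted appropriately, is $\lesssim(h_1+N^{-1}\max|\psi'|)^k$ because the $\eps^{1/2}$ factor on the gradients absorbs the $\eps^{-1/2}$ from the layer derivatives; here I use $h_1\lesssim\eps\le1$, $\max|\psi'|\gtrsim1$ and that the base is $\le1$, so higher powers dominate lower ones. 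I expect the main obstacle to be precisely the weak-layer analysis on the equidistant coarse mesh: there the usual S-mesh and $\max|\psi'|$ machinery is unavailable, so the estimate must be carried entirely by the smallness $\eps^{1-\ell}$ of $W$, the balancing choice of interpolation order $s=k$ on the fine piece, and the decay condition \eqref{ass:eps} on the coarse piece---making these three conspire into the single clean power $\eps^{1/2}N^{-k}$ is the delicate point.
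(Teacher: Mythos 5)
Your overall strategy coincides with the paper's: split $u=S+E+W$, apply the local estimates \eqref{eq:inter:1}--\eqref{eq:inter:2} with region-dependent choices of $s$ and $t$, exploit the smallness of $W$ on the fine piece $(1,1+\mu)$ via the balance $h_2^k\norm{W^{(k)}}{L^2}\sim\eps^{1/2}N^{-k}$, and invoke \eqref{ass:eps} on the coarse piece $(1+\mu,2)$. Estimates \eqref{eq:intest:2}--\eqref{eq:intest:3} come out correctly this way, and your identification of the weak layer on the equidistant pieces as the crux is exactly right.

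There is, however, a gap in your derivation of \eqref{eq:intest:1}. You feed the $W$-contribution $\eps^{1/2}N^{-k}$ into the final triangle inequality and claim it is absorbed into $(h_1+N^{-1}\max|\psi'|)^{k+1/2}$. It is not: that would require $\eps^{1/2}\lesssim (h_1+N^{-1}\max|\psi'|)^{1/2}$, i.e.\ essentially $\eps\lesssim N^{-1}\max|\psi'|$, which is nowhere assumed (only $\eps\lesssim(\ln N)^{-1}$ holds; take e.g.\ a Bakhvalov-S-mesh with $\eps=(\ln N)^{-2}$ to see the failure). The extra half power in \eqref{eq:intest:1} has to be earned by sharper $L^2$-estimates for $W$ than the ones you derive: on the fine piece one must use \eqref{eq:inter:1} with $s=k+1$ (not $s=k$), giving $h_2^{k+1}\norm{W^{(k+1)}}{L^2(1,1+\mu)}\lesssim N^{-(k+1)}\eps^{\frac12-\frac1k}\leq N^{-(k+1)}$ for $k\geq2$; and on the coarse piece one must combine the $s=1$ and $s=2$ estimates, $\norm{W-IW}{L^2(1+\mu,2)}\lesssim\min\{N^{-1}\eps^{1/2},N^{-2}\eps^{-1/2}\}\e^{-\eps^{-1/k}}\lesssim N^{-3/2}\e^{-\eps^{-1/k}}\lesssim N^{-(k+1/2)}$ by \eqref{ass:eps} --- your single $s=1$ bound only yields $\eps^{1/2}N^{-k}$, which suffices for \eqref{eq:intest:5} but not for \eqref{eq:intest:1}. (The case $k=1$ is handled by running this two-order argument on all of $(1,2)$, since then $\mu=\tfrac12$.) A second, minor point: your dismissal of $E$ on $(1,1+\mu)$ as ``negligible'' should be quantified, since $\norm{E^{(k)}}{L^2(1,1+\mu)}\sim\eps^{-k+1/2}\e^{-\beta/\eps}$ carries a growing power of $\eps^{-1}$; the bound $\eps^{-1}\e^{-\beta/\eps}\leq(\e\beta)^{-1}$ is what closes this.
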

  \begin{proof}
    Using \eqref{eq:inter:1} and \eqref{eq:inter:2} with $s=k+1$ and $t=k$, resp. we obtain
    \begin{align*}
      \norm{S-IS}{L^2(\Omega)}&\lesssim (h_1+H_1+h_2+H_2)^{k+1}\lesssim (h_1+N^{-1})^{k+1},\\
      \norm{(S-IS)'}{L^2(\Omega)}&\lesssim (h_1+H_1+h_2+H_2)^{k}\lesssim (h_1+N^{-1})^{k}.
    \end{align*}
    For $E$ we can proceed as on a classical S-type mesh and obtain with \eqref{eq:inter:1} and $s=k+1$
    \begin{equation}
      \norm{E-IE}{L^2(0,\lambda)}\lesssim \eps^{1/2}(N^{-1}\max|\psi'|)^{k+1},\label{eq:Eest}
    \end{equation}
    while with a triangle inequality and the $L^\infty$-stability of $I$ it follows
    \begin{equation}
      \norm{E-IE}{L^2(\lambda,2)}
        \lesssim \norm{E}{L^2(\lambda,2)}+\norm{E}{L^\infty(\lambda,2)}
        \lesssim N^{-(k+1)}
        .\label{eq:tmp2}
    \end{equation}
    With \eqref{eq:inter:2} and $t=k$ we obtain 
    \[
      \norm{(E-IE)'}{L^2(0,\lambda)}\lesssim \eps^{-1/2}(N^{-1}\max|\psi'|)^{k},
    \]
    and with a triangle and an inverse inequality
    \[
      \norm{(E-IE)'}{L^2((\lambda,1)\cup(1+\mu,2))}
        \lesssim \norm{E'}{L^2(\lambda,2)}+N\norm{E}{L^\infty(\lambda,2)}
        \lesssim \eps^{-1/2}N^{-k}.
    \]
    In the remaining part \eqref{eq:inter:2} with $t=k$ yields
    \begin{align*}
      \norm{(E-IE)'}{L^2(1,1+\mu)}
        &\lesssim h_2^k\norm{E^{(k+1)}}{L^2(1,1+\mu)}
        \lesssim N^{-k}\eps^{k-1}\eps^{-(k+1)}\eps^{1/2}E(1)
        \lesssim N^{-k}\eps^{-3/2}\e^{-\beta/\eps}\\
        &\lesssim \eps^{-1/2}N^{-k}
    \end{align*}
    due to
    \begin{equation}\label{eq:epsbound}
      \eps^{-1}\e^{-\beta/\eps}\leq \frac{1}{\e\beta}.
    \end{equation}
    For the estimation of $W$ we follow the idea given in \cite{RR11} and apply \eqref{eq:inter:1} with $s=1$ and $s=2$ in order to obtain
    \begin{align}
      \norm{W-IW}{L^2(1+\mu,2)}
        &\lesssim N^{-1}\norm{W'}{L^2(1+\mu,2)}
        \lesssim N^{-1}\eps^{-1/2}W(1+\mu)
        \lesssim N^{-1}\eps^{1/2}\e^{-\eps^{-1/k}},\label{eq:tmp}\\
      \norm{W-IW}{L^2(1+\mu,2)}
        &\lesssim N^{-2}\norm{W''}{L^2(1+\mu,2)}
        \lesssim N^{-2}\eps^{-1/2}\e^{-\eps^{-1/k}}.\notag
    \end{align}
    Combining these results we have
    \[
      \norm{W-IW}{L^2(1+\mu,2)}
        \lesssim N^{-3/2}\e^{-\eps^{-1/k}}
        \lesssim N^{-(k+1/2)},
    \]
    due to \eqref{ass:eps}.
    Note that for $k=1$ this approach can also be done on the interval $(1,2)$, see \cite{RR11}. For $k>1$ we also have
    with \eqref{eq:inter:1} and $s=k+1$
    \begin{align*}
      \norm{W-IW}{L^2(1,1+\mu)}
        &\lesssim h_2^{k+1}\norm{W^{(k+1)}}{L^2(1,1+\mu)}
        \lesssim N^{-(k+1)}\eps^{\frac{1}{2}-\frac{1}{k}}.
    \end{align*}
    For the derivative we obtain using \eqref{eq:inter:2} with $t=k$ and $t=1$, resp.
    \begin{align*}
      \norm{(W-IW)'}{L^2(1,1+\mu)}
        &\lesssim h_2^{k}\norm{W^{(k+1)}}{L^2(1,1+\mu)}
        \lesssim \eps^{-1/2}N^{-k},\\
      \norm{(W-IW)'}{L^2(1+\mu,2)}
        &\lesssim N^{-1}\norm{W''}{L^2(1+\mu,2)}
        \lesssim \eps^{-1/2}N^{-1}\e^{-\eps^{-1/k}}
        \lesssim \eps^{-1/2}N^{-k},
    \end{align*}
    due to \eqref{ass:eps}.
    Collecting the individual results gives \eqref{eq:intest:1} and \eqref{eq:intest:2}. 
    
    With \eqref{eq:inter:1} and $s=k$ we also obtain
    \begin{align*}
      \norm{W-IW}{L^2(1,1+\mu)}
        &\lesssim h_2^k\norm{W^{(k)}}{L^2(1,1+\mu)}
        \lesssim \eps^{1/2}N^{-k}
    \end{align*}
    and together with \eqref{eq:tmp} and \eqref{ass:eps} we have \eqref{eq:intest:5}.   
    
    The result \eqref{eq:intest:4} follows directly from \eqref{eq:tmp2}.
    For the final results on $E$ we apply \eqref{eq:inter:1} with $s=k$
    and obtain
    \begin{align*}
      \norm{E-IE}{L^2(1,1+\mu)}
        &\lesssim h_2^k\norm{E^{(k)}}{L^2(1,1+\mu)}
         \lesssim \eps^{-1/2}N^{-k}\e^{-\beta/\eps}
         \lesssim \eps^{1/2}N^{-k},
    \end{align*}
    due to \eqref{eq:epsbound}. Together with \eqref{eq:Eest} we finish the proof.    
  \end{proof}

  \begin{remark}
    Assumption \eqref{ass:eps} restricts the application of the method for $k>1$ slightly. 
    We can rewrite it as
    \[
      N\leq \e^{\frac{1}{(k-1)\eps^{1/k}}}
    \]
    and Table \ref{tab:ass:eps} shows the bounds on $N$ obtained by this requirement.
    \begin{table}[htb]
      \caption{Bounds on $N$ for given $\eps$ and $k>1$\label{tab:ass:eps}}
      \begin{center}
        \begin{tabular}{l|llll}
          \diagbox{$\eps$}{$k$} & 2 & 3 & 4 & 5\\
          \hline
          1e-2   &   2.2e+04  & 10      &  2      & 1 \\
          1e-3   &   5.4e+13  & 148     &  6      & 2 \\
          1e-4   &   2.7e+43  & 47675   &  28     & 4 \\
          1e-5   &   2.7e+137 & 1.2e+10 &  375    & 12\\
          1e-6   &   2.0e+434 & 5.2e+21 &  37832  & 52
        \end{tabular}
      \end{center}
    \end{table}
    For small $k$ and reasonably small $\eps$ the coarser mesh approach can be used. For higher
    polynomial degrees, the weak layer should be resolved by a classical layer-adapted mesh like 
    the S-type mesh. Here we could still increase the value of the transition point, because
    \[
      \mu = \frac{\sigma\eps^\frac{k-1}{k}}{\beta}\ln(N)
          > \frac{\sigma\eps}{\beta}\ln(N)
    \]
    would still be enough.
  \end{remark}
  \begin{theorem}
    For the solution $u$ of \eqref{eq:Lu} and the numerical solution $u_N$ of \eqref{eq:method} 
    holds on the coarser S-type mesh with $\sigma\geq k+1$ and $\e^{-\eps^{-1/k}}\leq N^{1-k}$
    \[
      \tnorm{u-u_N}\lesssim  (h+N^{-1}\max|\psi'|)^k.
    \]
  \end{theorem}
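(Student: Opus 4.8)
The plan is to follow the skeleton of the proof of Theorem~\ref{thm:conv}, but to feed it with the sharper, mesh-specific interpolation bounds \eqref{eq:intest:1}--\eqref{eq:intest:5} instead of those of Lemma~\ref{lem:inter}. First I would split $\tnorm{u-u_N}\leq\tnorm{u-Iu}+\tnorm{Iu-u_N}$ and bound the interpolation part directly by \eqref{eq:intest:2} (here $h=h_1$, the minimal width in the strong-layer region). For the second part I set $\chi:=Iu-u_N\in\U_N$ and $\eta:=u-Iu$; coercivity together with Galerkin orthogonality give $\tnorm{\chi}^2\leq B(\eta,\chi)$ exactly as in Theorem~\ref{thm:conv}, so everything reduces to estimating the contributions $\eps\scp{\eta',\chi'}_\Omega$, $\scp{c\eta,\chi}_\Omega$, $-\scp{b\eta',\chi}_\Omega$ and $\scp{d\eta(\cdot-1),\chi}_{(1,2)}$ against $(h+N^{-1}\max|\psi'|)^k\,\tnorm{\chi}$.

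The diffusion, reaction and shift terms are routine. For the diffusion term I would write $\eps\scp{\eta',\chi'}_\Omega\leq(\eps^{1/2}\norm{\eta'}{L^2(\Omega)})(\eps^{1/2}\norm{\chi'}{L^2(\Omega)})\leq\tnorm{u-Iu}\,\tnorm{\chi}$ and invoke \eqref{eq:intest:2}. The reaction term is controlled by Cauchy--Schwarz and \eqref{eq:intest:1}, which even yields the higher order $(h+N^{-1}\max|\psi'|)^{k+1/2}$, and the shift term likewise, since $\norm{\eta(\cdot-1)}{L^2(1,2)}=\norm{\eta}{L^2(0,1)}\lesssim\norm{u-Iu}{L^2(\Omega)}$. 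The convection term $-\scp{b\eta',\chi}_\Omega$ is where the layer structure enters, so I would split $\eta=(S-IS)+(E-IE)+(W-IW)$. The smooth contribution is harmless by Cauchy--Schwarz, as $\norm{(S-IS)'}{L^2(\Omega)}$ carries no negative power of $\eps$.

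For the strong layer I would integrate by parts, $-\scp{b(E-IE)',\chi}_\Omega=\scp{b(E-IE),\chi'}_\Omega$ (boundary terms vanish since $\chi\in H^1_0$ and $E-IE$ is continuous), and then split the domain into the fine part $(0,\lambda)\cup(1,1+\mu)$ and the coarse part $(\lambda,1)\cup(1+\mu,2)$, matching the mesh regions. On the fine part \eqref{eq:intest:3} and $\eps^{1/2}\norm{\chi'}{L^2(\Omega)}\leq\tnorm{\chi}$ give $(N^{-1}\max|\psi'|)^k\tnorm{\chi}$; on the coarse part \eqref{eq:intest:4} together with an inverse inequality (the local mesh width there being $\sim N^{-1}$) turns $N^{-(k+1)}\norm{\chi'}{L^2}$ into $N^{-k}\norm{\chi}{L^2}\lesssim N^{-k}\tnorm{\chi}$. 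I expect the genuine obstacle to be the weak-layer contribution $-\scp{b(W-IW)',\chi}_{(1,2)}$: unlike in Theorem~\ref{thm:conv}, on the coarser mesh the derivative estimate only gives $\norm{(W-IW)'}{L^2(\Omega)}\lesssim\eps^{-1/2}N^{-k}$, so a direct Cauchy--Schwarz would leave an inadmissible factor $\eps^{-1/2}$. The remedy is once more integration by parts, $-\scp{b(W-IW)',\chi}_{(1,2)}=\scp{b(W-IW),\chi'}_{(1,2)}$, which is legitimate because $(W-IW)(1)=0$ (the point $x=1$ is a mesh node) and $\chi(2)=0$; combining the sharp $L^2$-estimate \eqref{eq:intest:5}, $\norm{W-IW}{L^2(\Omega)}\lesssim\eps^{1/2}N^{-k}$, with $\norm{\chi'}{L^2(1,2)}\leq\eps^{-1/2}\tnorm{\chi}$ makes the two half-powers of $\eps$ cancel and leaves $N^{-k}\tnorm{\chi}$. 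Collecting all contributions yields $\tnorm{\chi}^2\lesssim(h+N^{-1}\max|\psi'|)^k\tnorm{\chi}$, and dividing by $\tnorm{\chi}$ together with the triangle inequality completes the proof.
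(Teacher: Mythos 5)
Your proposal is correct and follows essentially the same route as the paper, whose proof is only the one-line remark that one should repeat the argument of Theorem~\ref{thm:conv} while ``considering $E$ and $W$ in the convective term separately and using the estimates of the previous lemma.'' You have correctly identified and filled in the one genuinely new step hidden in that remark, namely that on the coarser mesh the weak-layer convection term must also be integrated by parts (since $\norm{(W-IW)'}{L^2}$ now carries a factor $\eps^{-1/2}$) so that the $L^2$-bound \eqref{eq:intest:5} can be paired with $\eps^{-1/2}\tnorm{\chi}$; the only cosmetic omission is the harmless lower-order term $\scp{b'(W-IW),\chi}$ arising from integration by parts with variable $b$.
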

  \begin{proof}
    The proof follows that of Theorem~\ref{thm:conv} by considering $E$ and $W$ 
    in the convective term separately and using the estimates of the previous lemma.
  \end{proof}

  \section{Numerical example}\label{sec:NumEx}
    Let us consider as example the following problem
    \begin{align*}
      -\eps u''(x)-(2+x)u'(x)+(3+x)u(x)-d(x)u(x-1)&=3,\,x\in(0,2),\\
      u(2)&=0,\\
      u(x)&=x^2,\,x\in(-1,0],
    \end{align*}
    where 
    \[
     d(x) = \begin{cases}
              1-x,& x<1,\\
              2+\sin(4\pi x),&x\geq 1.
            \end{cases}
    \]
    Here the exact solution is not known.
    On a Bakhvalov-S-mesh with $\sigma=k+1$ and $\eps=10^{-6}$ we obtain the results listed in Table~\ref{tab:S}.
    \begin{table}[htb]
      \caption{Errors $\tnorm{u-u_N}$ on a Bakhvalov-S-mesh\label{tab:S}}
      \begin{center}
        \begin{tabular}{rlclclclc}
          $N$ & \multicolumn{2}{c}{$k=1$} & \multicolumn{2}{c}{$k=2$} & \multicolumn{2}{c}{$k=3$} & \multicolumn{2}{c}{$k=4$}\\
          \hline
           16 & 1.27e-01 & 0.96 & 2.17e-02 & 1.94 & 3.53e-03 & 2.89 & 5.74e-04 & 3.85\\
           32 & 6.52e-02 & 0.98 & 5.68e-03 & 1.96 & 4.77e-04 & 2.94 & 3.98e-05 & 3.92\\
           64 & 3.31e-02 & 0.99 & 1.46e-03 & 1.98 & 6.20e-05 & 2.97 & 2.63e-06 & 3.96\\
          128 & 1.67e-02 & 0.99 & 3.69e-04 & 1.99 & 7.92e-06 & 2.98 & 1.69e-07 & 3.98\\
          256 & 8.36e-03 & 1.00 & 9.29e-05 & 1.99 & 1.00e-06 & 2.99 & 1.08e-08 & 3.98\\
          512 & 4.19e-03 & 1.00 & 2.33e-05 & 1.99 & 1.26e-07 & 3.00 & 6.81e-10 & 3.71\\
         1024 & 2.10e-03 &      & 5.87e-06 &      & 1.58e-08 &      & 5.21e-11 &     \\
        \end{tabular}  
      \end{center}
    \end{table}
    For other values of $\eps$ the results are similar. Obviously we see the expected rates of $N^{-k}$
    in $\tnorm{u-u_N}$. For the computation of these results instead of an exact solution,  
    a reference solution on a finer mesh and higher polynomial degree was used.
    
    On the coarsened mesh we obtain the results shown in Table~\ref{tab:coarse}.
    \begin{table}
      \caption{Errors $\tnorm{u-u_N}$ on the coarsened mesh\label{tab:coarse}}
      \begin{center}
        \begin{tabular}{rlclclclc}
          $N$ & \multicolumn{2}{c}{$k=1$} & \multicolumn{2}{c}{$k=2$} & \multicolumn{2}{c}{$k=3$} & \multicolumn{2}{c}{$k=4$}\\
          \hline
              & \multicolumn{8}{c}{$\eps=10^{-6}$}\\
          \hline
           16 & 1.27e-01 & 0.96 & 2.16e-02 & 1.93 & 3.53e-03 & 2.89 & 5.74e-04 & 3.85\\
           32 & 6.52e-02 & 0.98 & 5.67e-03 & 1.96 & 4.77e-04 & 2.94 & 3.98e-05 & 3.92\\
           64 & 3.31e-02 & 0.99 & 1.46e-03 & 1.98 & 6.20e-05 & 2.97 & 2.63e-06 & 3.96\\
          128 & 1.67e-02 & 0.99 & 3.69e-04 & 1.99 & 7.92e-06 & 2.98 & 1.69e-07 & 3.98\\
          256 & 8.36e-03 & 1.00 & 9.29e-05 & 1.99 & 1.00e-06 & 2.99 & 1.08e-08 & 3.98\\
          512 & 4.19e-03 & 1.00 & 2.33e-05 & 1.99 & 1.26e-07 & 3.00 & 6.81e-10 & 3.71\\
         1024 & 2.10e-03 &      & 5.87e-06 &      & 1.58e-08 &      & 5.19e-11 &     \\
        \hline
              & \multicolumn{8}{c}{$\eps=10^{-3}$}\\
        \hline
           16 & 1.27e-01 & 0.96 & 2.18e-02 & 1.93 & 3.55e-03 & 2.86 & 5.94e-04 & 3.86\\
           32 & 6.53e-02 & 0.98 & 5.72e-03 & 1.95 & 4.88e-04 & 2.96 & 4.08e-05 & 3.57\\
           64 & 3.31e-02 & 0.99 & 1.48e-03 & 1.96 & 6.26e-05 & 2.97 & 3.43e-06 & 0.66\\
          128 & 1.67e-02 & 0.99 & 3.80e-04 & 1.98 & 7.97e-06 & 2.99 & 2.17e-06 & 0.44\\
          256 & 8.38e-03 & 0.99 & 9.63e-05 & 2.03 & 1.01e-06 & 2.98 & 1.59e-06 & 0.78\\
          512 & 4.21e-03 & 1.00 & 2.36e-05 & 2.01 & 1.28e-07 & 2.77 & 9.29e-07 & 1.61\\
         1024 & 2.11e-03 &      & 5.86e-06 &      & 1.88e-08 &      & 3.05e-07 &     \\
        \end{tabular}   
      \end{center}
    \end{table}
    We observe for $\eps=10^{-6}$ almost the same numbers as for the Bakhvalov-S-mesh. 
    Here the conditions of Table~\ref{tab:ass:eps} are fulfilled and we do not observe a 
    reduction in the orders of convergence. But for the larger $\eps=10^{-3}$ there is a 
    visible reduction in the convergence orders for $k=4$. This demonstrates clearly, that
    for higher polynomial degrees and rather large $\eps$ a classical layer adapted mesh 
    should be chosen.
    
    \vspace{1em}
  {{\bf Acknowledgment}. The first author is supported by the Ministry of Education, Science and Technological Development of the Republic of Serbia under
    grant no. 451-03-68/2022-14/200134, while the first, second and third authors are supported by the bilateral project "Singularly perturbed problems with multiple parameters" between Germany and Serbia, 2021-2023 (DAAD project 57560935).}

  \bibliographystyle{plain}
  \bibliography{lit}

\begin{thebibliography}{10}

\bibitem{Bansal}
K.~Bansal, P.~Rai, and K.K. Sharma.
\newblock Numerical treatment for the class of time dependent singularly
  perturbed parabolic problems with general shift arguments.
\newblock {\em Differ. Equ. Dyn. Syst.}, 25(2):327--346, 2017.

\bibitem{BFrLR22}
M.~Brdar, S.~Franz, L.~Ludwig, and H.-G. Roos.
\newblock A time dependent singularly perturbed problem with shift in space.
\newblock submitted, arXiv:2202.01601.

\bibitem{Chakravarthy}
P.P. Chakravarthy and K.~Kumar.
\newblock An adaptive mesh method for time dependent singularly perturbed
  differential-difference equations.
\newblock {\em Nonlinear Engineering}, 8:328--339, 2019.

\bibitem{Duran}
R.G. Dur\'{a}n and A.L. Lombardi.
\newblock Finite element approximation of convection diffusion problems using
  graded meshes.
\newblock {\em Appl. Numer. Math.}, 56:1314--1325, 2006.

\bibitem{Gupta}
V.~Gupta, M.~Kumar, and S.~Kumar.
\newblock Higher order numerical approximation for time dependent singularly
  perturbed differential-difference convection-diffusion equations.
\newblock {\em Numer. Methods Partial Differential Equations}, 34:357--380,
  2018.

\bibitem{KumarKadalbajoo}
D.~Kumar and M.K. Kadalbajoo.
\newblock A parameter-uniform numerical method for time-dependent singularly
  perturbed differential-difference equations.
\newblock {\em Appl. Math. Model.}, 35:2805--2819, 2011.

\bibitem{KumarKumari}
D.~Kumar and P.~Kumari.
\newblock Parameter-uniform numerical treatment of singularly perturbed
  initial-boundary value problems with large delay.
\newblock {\em Appl. Numer. Math.}, 153:412--429, 2020.

\bibitem{Melnikov2012}
Y.A. Melnikov and M.Y. Melnikov.
\newblock {\em Green's Functions: Construction and Applications}.
\newblock De Gruyter, 2012.

\bibitem{RS15}
Pratima Rai and Kapil~K. Sharma.
\newblock Singularly perturbed convection-diffusion turning point problem with
  shifts.
\newblock In {\em Mathematical analysis and its applications}, volume 143 of
  {\em Springer Proc. Math. Stat.}, pages 381--391. Springer, New Delhi, 2015.

\bibitem{RR11}
Chr. Reibiger and H.-G. Roos.
\newblock Numerical analysis of a system of singularly perturbed
  convection-diffusion equations related to optimal control.
\newblock {\em NMTMA}, 4(4):562--575, 2011.

\bibitem{Roos22}
H.-G. Roos.
\newblock Layer-adapted meshes for weak boundary layers, 2022.

\bibitem{RL99}
H.-G. Roos and T.~Lin{\ss}.
\newblock Sufficient conditions for uniform convergence on layer-adapted grids.
\newblock {\em Computing}, 63:27--45, 1999.

\bibitem{RST08}
H.-G. Roos, M.~Stynes, and L.~Tobiska.
\newblock {\em Robust numerical methods for singularly perturbed differential
  equations}, volume~24 of {\em Springer Series in Computational Mathematics}.
\newblock Springer-Verlag, Berlin, second edition, 2008.

\bibitem{SR12}
V.~Subburayan and N.~Ramanujam.
\newblock Asymptotic initial value technique for singularly perturbed
  convection-diffusion delay problems with boundary and weak interior layers.
\newblock {\em Appl. Math. Lett.}, 25(12):2272--2278, 2012.

\bibitem{SR13}
V.~Subburayan and N.~Ramanujam.
\newblock An initial value technique for singularly perturbed
  convection-diffusion problems with a negative shift.
\newblock {\em J. Optim. Theory Appl.}, 158(1):234--250, 2013.

\end{thebibliography}

\appendix
  \section{Expansion of terms involving Green's function}\label{app:a}
    When estimating $\alpha$ in \eqref{eq:alpha} we have $\displaystyle \alpha=\frac{N}{D},$ 
 where
    \begin{align*}
            N&:=\beta-\delta+\int_0^1 G_x(0,t)\left(f(t+1)-\delta\hat c(t)+(\beta-\delta)\hat b(t)-d\int_0^1G(t,s)f(s)\ds\right)\dt\\
        &\hspace{2cm}-\int_0^1 G_x(1 ,t)f(t)\dt,\notag\\
      D&:=2+\int_0^1 G_x(1,t)\hat b(t)\dt+\int_0^1 G_x(0,t)\left(\hat b(t)+\hat c(t)+d\int_0^1G(t,s)\hat b(s)\ds\right)\dt,\notag
    \end{align*}
    and $\hat b(t):=b-ct$ and $\hat c(t):=c+dt$. The Green's function $G$ is defined as
    \[
      G(x,t) = \begin{cases}
                \displaystyle -\frac{v_1(x)v_2(t)}{\eps w(t)},&x\leq t,\\[1ex]
                \displaystyle -\frac{v_1(t)v_2(x)}{\eps w(t)},&x>t,
               \end{cases}
    \]
    where $v_1$ is the solution of
    \[
      -\eps v_1''(x)-b v_1'(x)+cv_1(x)=0,\,
      v_1(0)=0,\,v_1'(0)=1,
    \]
    $v_2$ is the solution of
    \[
      -\eps v_2''(x)-b v_2'(x)+cv_2(x)=0,\,
      v_2(1)=0,\,v_2'(1)=1
    \]
    and $w$ their Wronskian 
    \[
      w(x)=v_1(x) v_2'(x)-v_1'(x)v_2(x).
    \]
    We can expand the terms in $D$ in powers of $\eps$, here done using 
    the symbolic math program MAPLE, and obtain
    \begin{align*}
      \int_0^1 G_x(1,t)\dt  &= -\frac{1}{b}+\ord{\eps},\\
      \int_0^1 G_x(1,t)t\dt &= -\frac{1}{b}+\ord{\eps},\\
      \int_0^1 G_x(0,t)\dt  &=  \frac{b}{c} \left(1-\e^{-\frac{c}{b}}\right)\frac{1}{\eps} 
                               + \frac{b - (2b + c)\e^{-\frac{c}{b}}}{b^2}
                               +\ord{\eps},\\
      \int_0^1 G_x(0,t)t\dt &=  \frac{b}{c^2}(b - (b + c)\e^{-\frac{c}{b}})\frac{1}{\eps} 
                              + \frac{1}{b^2c}(2b^2 - (2b^2 + 3bc + c^2)\e^{-\frac{c}{b}})
                              +\ord{\eps},
    \end{align*}
    \begin{align*}
      \int_0^1 G_x(0,t)\int_0^1G(t,s)\ds\dt &=   \frac{b - (b + c)\e^{-\frac{c}{b}}}{c^2}\frac{1}{\eps} 
                                                - \frac{b + c}{b^3}\e^{-\frac{c}{b}}
                                                + \ord{\eps},\\
      \int_0^1 G_x(0,t)\int_0^1G(t,s)s\ds\dt &=   \frac{2b^2 - (b^2 + (b+c)^2)\e^{-\frac{c}{b}}}{c^3}\frac{1}{\eps},\\&\hspace*{2cm}
                                                + \frac{2b^3 - (2b^3 + 2b^2c + 2bc^2 + c^3)\e^{-\frac{c}{b}}}{c^2b^3} 
                                                + \ord{\eps}.
    \end{align*}
    Combining these expansions into the denominator $D$ we have
    \[
      D = (b + d\e^{-\frac{c}{b}})\frac{1}{\eps}
          -\frac{d}{b^3}(2b^2 - c^2)\e^{-\frac{c}{b}} 
          +\ord{\eps}
    \]
    and therefore, remember $b,d>0$, it follows $\displaystyle D\gtrsim \frac{1}{\eps}$. Using above expansions again, we also have
    \begin{align*}
      \int_0^1|G_x(1,t)|\dt &\lesssim 1,              &\int_0^1|G_x(0,t)|\int_0^1 G(t,s)ds\dt  & \lesssim \frac{1}{\eps},\\
      \int_0^1|G_x(0,t)|\dt &\lesssim \frac{1}{\eps}, &\int_0^1|G_x(0,t)|\int_0^1 G(t,s)sds\dt & \lesssim \frac{1}{\eps},\\
      \int_0^1|G_x(0,t)|t\dt&\lesssim \frac{1}{\eps},
    \end{align*}
    and can estimate the numerator $N$
    \[
      |N| \lesssim |\beta|+|\delta|+\left(|\beta|+|\delta|+\norm{f}{L^\infty(0,2)}\right)\cdot\frac{1}{\eps}.
    \]

\end{document}